\newcounter{mynumcounter}
	       {%
		 \begin{list}{(\roman{mynumcounter})\hspace*{\fill}}%
		   {
		     \setlength{\topsep}{0cm}
		     \setlength{\partopsep}{0cm}
		     \setlength{\itemsep}{0ex}
		     \setlength{\parsep}{0cm}
		     \setlength{\leftmargin}{0cm}
		     \setlength{\itemindent}{8mm}		   
		     \setlength{\labelsep}{3mm}
		     \setlength{\labelwidth}{5mm}
		     \usecounter{mynumcounter}
		   }%
	       }
	       {\end{list}}
\newcommand{\eps}{\varepsilon}
\newcommand{\del}{\partial}
\newcommand{\Vol}{\operatorname{Vol}}
\newcommand{\Ric}{\operatorname{Ric}}
\newcommand{\Scal}{\operatorname{Scal}}
\newcommand{\tr}{\operatorname{tr}}
\renewcommand{\div}{\operatorname{div}}
\newcommand{\IR}{\mathbf{R}}
\newcommand{\CU}{\mathcal{U}}
\newcommand{\CV}{\mathcal{V}}
\newcommand{\CW}{\mathcal{W}}
\newcommand{\al}{\alpha}
\newcommand{\ga}{\gamma}
\newcommand{\id}{\operatorname{id}}
\newcommand{\rmd}{\mathrm{d}}
\newcommand{\const}{\mathrm{const}}
\newcommand{\dmu}{\,\rmd\mu}
\newcommand{\ra}{\rangle}
\newcommand{\la}{\langle}
\newcommand{\inj}{\mathrm{inj}}
\newcommand{\Scalarcurv}[1]{\smash{\sideset{^{#1}}{}{\mathop\mathrm{Sc}\nolimits}}}
\newcommand{\ScalSig}{\!\Scalarcurv{\Sigma}}
\newcommand{\Acirc}{\hspace*{4pt}\raisebox{8.5pt}{\makebox[-4pt][l]{$\scriptstyle\circ$}}A}
\newcommand{\Acircbar}%
{\hspace*{4pt}\raisebox{8.5pt}{\makebox[-4pt][l]{$\scriptstyle\circ$}}\bar A}
\newcommand{\Kcircbar}%
{\hspace*{4pt}\raisebox{8.5pt}{\makebox[-4pt][l]{$\scriptstyle\circ$}}\bar K}
\newcommand{\half}{\tfrac{1}{2}}
\newcommand{\dvol}{\mathrm{d}V}
\renewcommand{\Scal}{\operatorname{Sc}}
\title{\titlefamily\Huge Small surfaces of Willmore type in Riemannian
manifolds}
\author{%
  \authname{Tobias Lamm
  \thanks{Partially supported by a PIMS Postdoctoral Fellowship.}}
  \authaddress
  {
    Department of Mathematics,
    University of British Columbia,
    1984 Mathematics Road,
    Vancouver, BC V6T 1Z2,
    Canada
  }
  \and
  \authname{Jan Metzger}
  \authaddress
  {
    Albert-Einstein-Institut,
    Am M\"uhlenberg 1,
    14476 Potsdam,
    Germany
  }
  \authaddress
  {
    Universit\"at Freiburg,
    Institut f\"ur Reine Mathematik,\\
    Eckerstr. 1,
    79104 Freiburg,
    Germany
  }
}
\date{}
\begin{document}
\hyphenation{}
\pagestyle{footnumber}
\maketitle
\thispagestyle{footnumber}
\begin{abst}%
  In this paper we investigate the properties of small surfaces of
  Willmore type in Riemannian manifolds. By \emph{small} surfaces we
  mean topological spheres contained in a geodesic ball of small
  enough radius. In particular, we show that if there exist such
  surfaces with positive mean curvature in the geodesic ball
  $B_r(p)$ for arbitrarily small radius $r$ around a point $p$ in the
  Riemannian manifold, then the scalar curvature must have a critical
  point at $p$.

  As a byproduct of our estimates we obtain a strengthened version of
  the non-existence result of Mondino \cite{Mondino:2008} that implies
  the non-existence of certain critical points of the Willmore
  functional in regions where the scalar curvature is non-zero.
\end{abst}
\section{Introduction}
In a previous paper \cite{Lamm-Metzger-Schulze:2009} Willmore type surfaces were introduced and foliations of asymptotically flat
manifolds by such surfaces were studied. In this paper we turn to the local
situation and consider Willmore type surfaces in small geodesic balls
in Riemannian manifolds. The focus is on a priori estimates for such
surfaces under the assumption of positive mean curvature and a growth
condition for the Lagrange parameter. As an application of these
estimates we derive a necessary condition for the existence of such
surfaces.

By surfaces of Willmore type we mean surfaces $\Sigma$ that are
critical for the Willmore functional
\begin{equation*}
  \CW(\Sigma) = \frac 1 2 \int_\Sigma H^2 \dmu
\end{equation*}
subject to an area constraint $|\Sigma| = a$, where $a$ is some fixed
constant. These surfaces are solutions of the Euler-Lagrange equation 
\begin{equation}
  \label{eq:4}
  \Delta H + H |\Acirc|^2 + H \Ric(\nu,\nu) + \lambda H = 0
\end{equation}
where $\lambda\in \IR$ is the Lagrange parameter, $H$ the mean
curvature of $\Sigma$, $\Acirc$ denotes the traceless part of the
second fundamental form, $\Ric$ refers to the Ricci curvature of the
ambient manifold, $\nu$ is the normal of $\Sigma$ and $\Delta$ denotes
the Laplace-Beltrami operator on $\Sigma$. In particular these surfaces
are a generalization of Willmore surfaces that are critical for $\CW$
without constraint and therefore satisfy the equation
\begin{equation*}
  \Delta H + H |\Acirc|^2 + H \Ric(\nu,\nu) = 0.
\end{equation*}
We note here that there are other functionals that can be considered
as generalizations of the Willmore functional in Riemannian manifolds,
for example the functional $\CU$ introduced in
section~\ref{sec:preliminaries} could be used (see \cite{Weiner:1978}).

The precise statement of the main result of this paper is the following:
\begin{theorem}
  \label{thm:intro1}
  Assume that $(M,g)$ is a Riemannian manifold such that the curvature and
  the first two derivatives of the curvature are bounded. Then there exist
  $\eps_0>0$ and $C<\infty$, depending only on these bounds, with the
  following properties.

  Given $p$ in $M$ and assume that there is $r_0>0$ such
  that for each $r\in (0,r_0]$ there exists a surface $\Sigma_r$ of
  Willmore type in $B_r(p)$, that is on $\Sigma_r$ we have
  \begin{equation*}
    \Delta H + H|\Acirc|^2 + H\Ric(\nu,\nu) + H\lambda_r = 0,
  \end{equation*}
  such that in addition the following conditions are satisfied for $\eps<\eps_0$:
  \begin{enumerate}
  \item $\Sigma_r$ is a topological sphere, 
  \item $\lambda_r \geq -\eps / |\Sigma_r|$, and
  \item $H>0$ on $\Sigma_r$.
  \end{enumerate}
  Then 
  \begin{equation*}
    \lim_{r\to 0} |\lambda_r  + \tfrac{1}{3} \Scal(p)| = 0.
  \end{equation*}
  Here $\Scal(p)$ denotes the Scalar curvature of $M$ at the point
  $p$. Furthermore
  \begin{equation*}
    \nabla\Scal(p) = 0
  \end{equation*}
  where $\nabla\Scal(p)$ denotes the gradient of the scalar curvature
  of $M$ at $p$.
\end{theorem}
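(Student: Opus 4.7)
My plan is to prove the statement by a blow-up argument at $p$, passing the rescaled Euler--Lagrange equation to a limit on which it becomes tractable, and then extracting two successive moment identities.

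Let $\rho_r := \sqrt{|\Sigma_r|/(4\pi)}$ be the area radius. Since $\Sigma_r\subset B_r(p)$ and $H>0$, a comparison argument forces $\rho_r \leq Cr$, so $\rho_r\to 0$. Choose base points $q_r\in M$ (e.g.\ a centre point of $\Sigma_r$), which necessarily satisfy $d(q_r,p)=o(1)$, and work in geodesic normal coordinates at $q_r$. Rescale by $\rho_r^{-1}$ to obtain $\tilde\Sigma_r \subset T_{q_r}M$ of area $4\pi$ in the metric $\tilde g_r = \rho_r^{-2}(\exp_{q_r})^{\ast} g(\rho_r\,\cdot)$, which converges in $C^k_{\text{loc}}$ to the Euclidean metric on $\IR^3$. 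The rescaled equation reads
\[
  \Delta\tilde H + \tilde H|\Acirc|^2_{\tilde g} + \tilde H\,\widetilde{\Ric}(\tilde\nu,\tilde\nu) + \tilde\lambda_r\tilde H = 0,\qquad \tilde\lambda_r = \rho_r^2\lambda_r,\ \widetilde{\Ric} = \rho_r^2\Ric,
\]
and the hypothesis $\lambda_r\ge -\eps/|\Sigma_r|$ becomes $\tilde\lambda_r\ge -\eps/(4\pi)$. Together with $H>0$ and the a priori estimates developed earlier in the paper, this yields uniform $W^{2,p}$ and then $C^{k,\alpha}$ control on $\tilde\Sigma_r$. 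A subsequence therefore converges to a limit $\Sigma_\infty\subset\IR^3$ solving $\Delta H + \tfrac{1}{2}H(H^2-4K) + \tilde\lambda_\infty H = 0$; because $\Sigma_\infty$ is a topological sphere with $H\geq 0$, it must be a unit round sphere and $\tilde\lambda_\infty=0$.

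For the first conclusion I integrate the Euler--Lagrange equation over $\Sigma_r$; since $\int\Delta H\,\dmu=0$,
\[
  -\lambda_r\int_{\Sigma_r} H\,\dmu = \int_{\Sigma_r} H|\Acirc|^2\,\dmu + \int_{\Sigma_r} H\,\Ric(\nu,\nu)\,\dmu.
\]
The trace identity $\int_{S^2}\nu^i\nu^j\,d\sigma = \tfrac{4\pi}{3}\delta^{ij}$, which holds on every round sphere in $\IR^3$, combined with $|\Ric(x)-\Ric(q_r)|=O(r)$ on $\Sigma_r$ and the blow-up convergence, gives $\int H\Ric(\nu,\nu)\,\dmu = (\tfrac{1}{3}\Scal(q_r)+o(1))\int H\,\dmu$, while $\int H|\Acirc|^2\,\dmu = o(\int H\,\dmu)$ since $\Acirc\to 0$ in the limit. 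Dividing and using $\Scal(q_r)\to\Scal(p)$ yields $\lambda_r \to -\tfrac{1}{3}\Scal(p)$.

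For the gradient condition I test the Euler--Lagrange equation against the normal-coordinate functions $x^a$ at $q_r$: since the ambient position vector satisfies $\Delta X = \vec H$,
\[
  \int_{\Sigma_r}\bigl(\Delta H + H|\Acirc|^2 + H\Ric(\nu,\nu) + \lambda_r H\bigr)\,x^a\,\dmu = 0.
\]
After two integrations by parts and Taylor expansion of $g$, $\Ric$ and $\Scal$, the principal $\rho_r$-order reproduces the first conclusion, while the next order collects the first derivatives of $\Ric$ at $q_r$. The odd-parity integrals $\int\omega^i\omega^j\omega^k\,d\sigma$ over the limit sphere vanish, and by the contracted second Bianchi identity $\nabla^i\Ric_{ij} = \tfrac{1}{2}\nabla_j\Scal$ the surviving combination $\Ric_{ij,k}(\delta^{ak}\delta^{ij}+\delta^{ai}\delta^{jk}+\delta^{aj}\delta^{ik}) = 2\nabla^a\Scal(q_r)$ is forced to zero. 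Passing $r\to 0$ gives $\nabla\Scal(p)=0$. The technical heart is this second moment identity: all $O(\rho_r)$ corrections (from the non-Euclidean $\tilde g_r$, from $\tilde\Sigma_r$ being only approximately a round sphere, and from the location of $q_r$ relative to $p$) must be tracked precisely and shown to collapse into the single multiple of $\nabla\Scal$ that the Bianchi identity produces. A streamlined alternative is to apply the conformal-Killing/divergence identity for the Willmore operator to a constant ambient vector field, as in \cite{Lamm-Metzger-Schulze:2009}, which packages this cancellation automatically.
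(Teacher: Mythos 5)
Your treatment of the first conclusion is essentially sound and, despite the blow-up packaging, runs parallel to the paper: the roundness of the limit is exactly what the De Lellis--M\"uller theorem (theorem~\ref{thm:delellis-muller}) combined with the decay $\|\Acirc\|_{L^2(\Sigma)}\leq C|\Sigma|$ of corollary~\ref{thm:lambda-bounded} delivers, and your integrated Euler--Lagrange identity is a variant of equation~\eqref{eq:6} (the paper multiplies by $H^{-1}$ first, which avoids needing $H$ close to constant). Two caveats: you assert uniform $C^{k,\alpha}$ convergence of the rescaled surfaces, but the a priori estimates of section~\ref{sec:a-priori-estimates} are purely $L^2$-based ($\nabla^2H/H$, $\nabla A$ in $L^2$); upgrading to smooth convergence of a fourth-order problem is a substantial piece of work that is nowhere needed --- the quantitative $L^2$ statements suffice. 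Also note that the initial estimate only gives $\int|\Acirc|^2\leq Cr^2+\eps$ with $\eps$ fixed, so ``$\Acirc\to0$ in the limit'' already requires the improved estimate of corollary~\ref{thm:lambda-bounded}, i.e.\ most of section~\ref{sec:a-priori-estimates}.

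The proof of the gradient condition has a genuine gap. The signal you want to isolate, $\nabla\Scal(p)\cdot\Vol(\Omega)$, is of order $\rho^3$ where $\rho=\sqrt{|\Sigma|/4\pi}$. But in the identity $\int_\Sigma(\Delta H+\dots)x^a\dmu=0$ several error terms are a priori only of order $\rho^2$. For instance, $\int\Delta H\,x^a\dmu=\int H\,\Delta_\Sigma x^a\dmu$ produces $\int H^2\nu^a\dmu$ plus ambient Christoffel corrections; writing $H=2/\rho+(H-2/\rho)$, the cross term $\tfrac4\rho\int(H-2/\rho)\nu^a\dmu$ is controlled only by $\tfrac4\rho\|H-2/\rho\|_{L^2}|\Sigma|^{1/2}\leq C\|\Acirc\|_{L^2}\leq C\rho^2$, and no parity argument kills it because $H-2/\rho$ is known only in $L^2$, not structurally. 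Controlling such terms to $o(\rho^3)$ would require expanding $\Sigma_r$ as a graph over a sphere and estimating its first spherical harmonics --- precisely the hard step in Ye's CMC analysis, which you do not address. The mechanism you mention in your last sentence as a ``streamlined alternative'' is in fact the paper's proof and the step that makes the cancellation work: varying by $f=H^{-1}g(b,\nu)$ turns $\delta_f\CW=\lambda\int g(b,\nu)\dmu$ and the dangerous pieces of $\delta_f\CV$ into fluxes of (almost) divergence-free fields ($b$ constant, $G$ divergence-free by Bianchi), hence into volume integrals of size $O(r\Vol(\Omega))=O(r|\Sigma|^{3/2})$, while everything else is controlled by $\|\Acirc\|_{L^2}$, $\|\nabla A\|_{L^2}$ and $\|H^{-1}\|_{L^\infty}$ at the correct order via theorem~\ref{thm:main-estimate} and proposition~\ref{thm:Hinv_bdd}. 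As written, your primary route does not close.
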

The first claim is proved in
section~\ref{sec:a-priori-estimates} as a consequence of the a priori
estimates for surfaces of Willmore type derived
there. Section~\ref{sec:gradient} is devoted to the proof of the
second claim.

For surfaces of constant mean curvature (CMC), that is surfaces
satisfying $H=\const$, analogous properties have been derived. Ye
showed that if there locally exists a regular foliation by CMC
surfaces near a point $p$, then $p$ is necessarily a critical point of
the scalar curvature.  For the detailed statement including the
technical conditions we refer to \cite[Theorem 2.1]{ye:1991} (see also
\cite{ye:1996a}). There are further results in this direction by Druet
\cite{druet:2002} and Nardulli \cite{nardulli:2009} where the
expansion of the isoperimetric profile of a Riemannian manifold is
computed. This computation shows that isoperimetric surfaces also
concentrate near critical points of the scalar curvature.

Indeed, it has been shown by Ye in \cite{ye:1991} that near
non-degenerate critical points of the scalar curvature there exist
spherical surfaces with arbitrarily large mean curvature, or
equivalently, arbitrarily small area. We expect that a similar
statement is true for surfaces of Willmore type, namely that near a
non-degenerate critical point of the scalar curvature there exist
surfaces of Willmore type with arbitrarily small area. We will address
this elsewhere.

An immediate corollary of theorem~\ref{thm:intro1}, is the following
strengthened version of the non-existence result of Mondino
\cite[Theorem 1.3]{Mondino:2008} for Willmore surfaces. These surfaces
are of Willmore type with multiplier $\lambda = 0$, and thus the
previous theorem is applicable.
\begin{corollary}
  Let $(M,g)$ be a Riemannian manifold as in theorem \ref{thm:intro1} and let $p\in M$. If
  $\Scal(p)\neq 0$ or $\nabla \Scal(p)\neq 0$ then there exists
  $r>0$ such that $B_{r}(p)$ does not contain spherical Willmore
  surfaces with positive mean curvature.
\end{corollary}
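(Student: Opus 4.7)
The approach is to apply Theorem~\ref{thm:intro1} in its contrapositive form to the family of Willmore surfaces guaranteed by the hypothesis. I would argue by contradiction: suppose $\Scal(p)\neq 0$ or $\nabla \Scal(p)\neq 0$, yet for every $r>0$ the ball $B_r(p)$ contains a spherical Willmore surface $\Sigma_r$ with positive mean curvature. For each such $r$ fix a choice of $\Sigma_r$, so that in particular the family $\{\Sigma_r\}_{r\in(0,r_0]}$ is available for every $r_0>0$.

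The key observation is that a Willmore surface, being critical for $\CW$ without any area constraint, satisfies the unconstrained Euler--Lagrange equation, which is exactly \eqref{eq:4} with Lagrange parameter $\lambda_r=0$. Hence each $\Sigma_r$ is automatically a surface of Willmore type in the sense of Theorem~\ref{thm:intro1}. Condition~(i) (sphere) and condition~(iii) ($H>0$) are part of the standing hypothesis, while condition~(ii) reduces to $0\geq -\eps/|\Sigma_r|$, which is trivially satisfied for any $\eps>0$; in particular, choosing any $\eps\in(0,\eps_0)$ with $\eps_0$ given by Theorem~\ref{thm:intro1} works. The curvature bounds on $(M,g)$ assumed in the corollary are the same as in Theorem~\ref{thm:intro1}.

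Applying Theorem~\ref{thm:intro1} to this family yields both $\lim_{r\to 0}|\lambda_r+\tfrac{1}{3}\Scal(p)|=0$ and $\nabla \Scal(p)=0$. Since $\lambda_r\equiv 0$, the first conclusion forces $\Scal(p)=0$, and combined with $\nabla \Scal(p)=0$ this contradicts the assumption that $\Scal(p)\neq 0$ or $\nabla \Scal(p)\neq 0$. There is essentially no technical obstacle beyond recognising that Willmore surfaces are precisely the $\lambda=0$ instances of Willmore-type surfaces; the corollary is thus an immediate reformulation of Theorem~\ref{thm:intro1}.
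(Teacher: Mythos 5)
Your proposal is correct and matches the paper's reasoning exactly: the paper likewise notes that Willmore surfaces are surfaces of Willmore type with multiplier $\lambda=0$, so that Theorem~\ref{thm:intro1} applies directly and forces $\Scal(p)=0$ and $\nabla\Scal(p)=0$, which is the contrapositive of the corollary. No issues.
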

We conclude the paper with section~\ref{sec:expans-willm-funct}, where
the a priori estimates are used to calculate the expansion of the
Willmore functional on surfaces as in theorem~\ref{thm:intro1}. More
precisely, we show that for these surfaces we have
\begin{equation}
\label{expansion}
\CW(\Sigma)=8\pi-\frac{|\Sigma|}{3}\Scal(p)+O(r|\Sigma|).
\end{equation}
This is analogous to the expansion derived by Mondino
\cite[Proposition 3.1]{Mondino:2008} for perturbed spheres.
%%% Local Variables: 
%%% mode: latex
%%% TeX-master: "master"
%%% ispell-local-dictionary: "en_US"
%%% End: 

%
\section{Preliminaries}
\label{sec:preliminaries}
In this section we describe our notation and we provide some tools in order to
analyze small surfaces in a Riemannian manifold.
\subsection{Notation}
We consider surfaces $\Sigma$ in a three dimensional Riemannian
manifold $(M,g)$, where $g$ denotes the metric on $M$. We denote by
$\nabla$ the induced Levi-Civita-Connection, by $\Ric$ its
Ricci-curvature and by $\Scal$ its scalar curvature.

If $p\in M$ and $\rho < \inj(M,g, p)$, the injectivity radius of $(M,g)$ at
$p$, we can introduce Riemannian normal coordinates on $B_\rho(p)$, the
geodesic ball of radius $\rho$ around $p$. These are given by the map
\begin{equation*}
  \Phi : B^E_\rho(0) \to B_\rho(p) : x \mapsto \exp_p(x)
\end{equation*}
where $B^E_\rho(0)$ is the Euclidean ball of radius $\rho$ in $\IR^3 \cong
T_p M$. In these coordinates the metric $g$ satisfies
\begin{equation}
  \label{eq:2}
  g = g^E + h
\end{equation}
where $g^E$ denotes the Euclidean metric and $h$ satisfies
\begin{equation}
  \label{eq:3}
  |x|^{-2}|h| + |x|^{-1}|\del h| + |\del^2 h| \leq h_0
\end{equation}
for all $x\in B^E_\rho(0)$. Here $h_0$ is a constant depending only on
the maximum of $|\Ric|$, $|\nabla\Ric|$ and $|\nabla^2\Ric|$ in
$B_\rho(p)$. More detailed expansions are not needed here but can be
found in \cite[Lemma V.3.4]{schoen-yau:1994}. For our purposes it is
sufficient to consider $M = B^E_\rho(0)$ to be equipped with the two
metrics $g$ and $g^E$. We will denote $B_\rho = B^E_\rho(0)$ in the
sequel.

If $\Sigma\subset B_\rho(p)$ is a surface, we denote its normal vector
by $\nu$, its induced metric by $\gamma$ and its second fundamental
form by $A$. The mean curvature of $\Sigma$ is denoted by $H = \tr_\ga
A$ and the traceless part of the second fundamental form by $\Acirc =
A - \half H \gamma$. Furthermore, $\dmu$ denotes the measure on
$\Sigma$. Note that also the Euclidean metric induces a full set of
geometric quantities on $\Sigma$, which will be distinguished by the
superscript $^E$, for example $\nu^E$, $A^E$, $H^E$, etc. All geometric quantities which we leave undecorated correspond to the
metric $g$.

%\subsection{Surface geometry}
%If $\Sigma\subset M$ is a hypersurface then its Scalar curvature
%$\ScalSig$ is given by the Gauss equation
%\begin{equation}
%  \label{eq:5}
%  \ScalSig = \Scal - 2\Ric(\nu,\nu) + \half H^2 - |\Acirc|^2.
%\end{equation}
%Recall that the Bochner identity \cite[Proposition
%4.15]{Gallot-Hulin-Lafontaine:1993} implies that for any smooth
%function $f\in C^\infty(\Sigma)$ we have
%\begin{equation*}
%  \int_\Sigma 2|(\nabla f)^\circ|^2 \dmu = \int_\Sigma (\Delta f)^2 -
%  2 \RicSig(\nabla f, \nabla f) \dmu,
%\end{equation*}
%where $\RicSig$ denotes the Ricci curvature of $\Sigma$.  Note that
%since $\Sigma$ is two dimensional, we have $\RicSig = \half \ScalSig
%\gamma$, and in view of the Gauss equation the Bochner identity thus
%becomes
%\begin{equation}
%  \label{eq:9}
%  \int_\Sigma 2|(\nabla f)^\circ|^2 + \half H^2 |\nabla f|^2 \dmu
%  =
%  \int_\Sigma (\Delta f)^2 + (|\Acirc|^2 -\Scal +
%  2\Ric(\nu,\nu))|\nabla f|^2 \dmu.
%\end{equation}

\subsection{The Willmore Functional}
Assume that $\Sigma\subset M$. Then we consider the \emph{Willmore
  functional} on $\Sigma$, that is the functional
\begin{equation*}
  \CW(\Sigma) = \frac 1 2 \int_\Sigma H^2 \dmu.
\end{equation*}
We say that a surface is of \emph{Willmore type with multiplier
  $\lambda\in\IR$} if it satisfies the equation
\begin{equation}
  \label{eq:33}
  \Delta H + H |\Acirc|^2 + H \Ric(\nu,\nu) + \lambda H = 0.
\end{equation}
Here $\Delta$ denotes the Laplace-Beltrami operator on $\Sigma$ and
$\Ric$ refers to the Ricci-curvature of the ambient metric $g$ as
before. Equation~\eqref{eq:33} arises as the Euler-Lagrange equation
for the the following variational problem:
\begin{equation*}
  \begin{cases}
    \text{Minimize} & \CW(\Sigma) \\
    \text{subject to} & |\Sigma| = a
  \end{cases}
\end{equation*}
where $a$ is a given constant. The parameter $\lambda$ in~\eqref{eq:33}
is then just the Lagrange-parameter of the critical point. For a
derivation of this and further motivation we refer to
\cite{Lamm-Metzger-Schulze:2009}.

Denoting by $\ScalSig$ the scalar curvature of $\Sigma$, the Gauss
equation implies that
\begin{equation}
  \label{eq:5}
  \ScalSig = \Scal - 2\Ric(\nu,\nu) + \half H^2 - |\Acirc|^2.
\end{equation}
Integrating this equation on $\Sigma$ yields the identity
\begin{equation}
  \label{eq:17}
  \CW(\Sigma) = 8\pi(1-q(\Sigma)) + \CU(\Sigma) + \CV(\Sigma)
\end{equation}
where $q(\Sigma)$ denotes the genus of $\Sigma$,
\begin{align*}
  &\CU(\Sigma) = \int_\Sigma |\Acirc|^2\dmu, \qquad\text{and} \\
  &\CV(\Sigma) = 2\int_\Sigma G(\nu,\nu) \dmu.
\end{align*}
Here $G = \Ric - \half \Scal g$ denotes the Einstein tensor of $M$.
This splitting was used in~\cite{Lamm-Metzger-Schulze:2009} to obtain
a priori estimates for the position of Willmore type surfaces in
asymptotically flat manifolds and shall also play an important role in
section~\ref{sec:gradient}.

\subsection{Small surfaces}
Since we compare the geometry of a surface $\Sigma$ with respect to
the ambient metrics $g$ and $g^E$ we need the following lemma.
\begin{lemma}
  \label{thm:metric_change_estimate}
  Let $g=g^E+h$ on $B_\rho$ be given. Then there exists a constant $C$
  depending only on $\rho$ and $h_0$ from equation~\eqref{eq:3} such
  that for all surfaces $\Sigma\subset B_r$ with $r<\rho$ we have
  \begin{align*}
    &|\gamma -\gamma^E| \leq C |x|^2, \\
    &|\dmu - \dmu^E| \leq C|x|^2, \\
    &|\nu - \nu^E | \leq C |x|^2, \quad\text{and} \\
    &|A- A^E| \leq C (|x| + |x|^2|A|).
  \end{align*}
\end{lemma}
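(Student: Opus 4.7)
The plan is to prove the four estimates in order, separating (i)--(iii), which need only the pointwise size bound $|h|\le h_0|x|^2$, from (iv), which also uses the derivative bound $|\partial h|\le h_0|x|$ through the Christoffel symbols of $g$ in the ambient Euclidean coordinates.

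For (i), the induced metric is just the restriction of the ambient metric to $T\Sigma\times T\Sigma$, so $\gamma-\gamma^E = h|_{T\Sigma}$ and $|\gamma-\gamma^E|\le|h|\le h_0|x|^2$. For (ii), I would fix an arbitrary local parametrization $F:U\to\Sigma$, write $\dmu=\sqrt{\det\gamma}\,du$ and $\dmu^E=\sqrt{\det\gamma^E}\,du$, and expand $\sqrt{\det(\gamma^E+(\gamma-\gamma^E))}$ about $\gamma^E$; the linear term produces the $O(|x|^2)$ error via (i). For (iii), decompose $\nu = a\,\nu^E+v$ with $v$ Euclidean-tangent to $\Sigma$. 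The two conditions $g(\nu,X)=0$ for $X\in T\Sigma$ and $g(\nu,\nu)=1$ become
\begin{equation*}
  \gamma^E(v,X) = -a\,h(\nu^E,X)-h(v,X),\qquad a^2(1+h(\nu^E,\nu^E))+2a\,h(\nu^E,v)+|v|^2+h(v,v) = 1,
\end{equation*}
a small perturbation of the system solved by $(a,v)=(1,0)$. Inverting linearly yields $|a-1|+|v|\le C|h|\le C|x|^2$, hence $|\nu-\nu^E|\le C|x|^2$.

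The main content is (iv). I would work pointwise: fix $q\in\Sigma$ and choose a local parametrization $F$ representing $\Sigma$ near $q$ as a Euclidean graph over the Euclidean tangent plane $T_q^E\Sigma$, with $\partial_iF|_q$ Euclidean-orthonormal and $\partial_i\partial_jF|_q$ parallel to $\nu^E(q)$, so that $g^E(\partial_i\partial_jF,\nu^E)=A^E_{ij}$ at $q$. Since $\Gamma^E=0$ in Euclidean coordinates, the coordinate expressions for the two second fundamental forms read
\begin{equation*}
  A_{ij} = g\bigl(\partial_i\partial_jF+\Gamma^g(\partial_iF,\partial_jF),\nu\bigr),\qquad A^E_{ij} = g^E(\partial_i\partial_jF,\nu^E),
\end{equation*}
whence at $q$
\begin{equation*}
  A_{ij}-A^E_{ij} = h(\partial_i\partial_jF,\nu) + g^E(\partial_i\partial_jF,\nu-\nu^E) + g\bigl(\Gamma^g(\partial_iF,\partial_jF),\nu\bigr).
\end{equation*}
Using the graph property $\partial_i\partial_jF|_q = A^E_{ij}\nu^E(q)$, the first two summands are $O(|x|^2|A^E|)$ by the bound on $|h|$ and by (iii). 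The third is $O(|x|)$ because $|\Gamma^g|\le C|\partial h|\le C|x|$ and $|\partial_iF|_q|=1$. Combining the three gives $|A-A^E|\le C(|x|+|x|^2|A^E|)$ at $q$, and one absorbs the $|A^E|$ factor into $|A|$ via $|A^E|\le|A|+|A-A^E|$ (valid once $C|x|^2<\tfrac12$).

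The main obstacle is precisely the choice of parametrization for (iv): one must avoid any uncontrolled tangential component of $\partial_i\partial_jF|_q$ contaminating the difference $A-A^E$. This is why I would argue pointwise with a graph parametrization adapted to each base point $q$, rather than in a single global chart. Everything else---expanding determinants, inverting small linear systems, translating between $\gamma$-norms and $\gamma^E$-norms---is routine once the setup is in place.
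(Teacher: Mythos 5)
The paper gives no proof of this lemma at all --- it is stated as a standard comparison fact (with a pointer to the expansions in Schoen--Yau for the metric itself), so there is nothing to compare your argument against line by line. Your write-up is a correct and complete proof: (i) is immediate from $\gamma-\gamma^E=h|_{T\Sigma}$, (ii) from the determinant expansion, (iii) from the perturbed linear system for $(a,v)$, and for (iv) the key point --- parametrizing $\Sigma$ near each base point $q$ as a Euclidean graph over $T_q\Sigma$ so that $\partial_i\partial_jF|_q$ is purely normal and the difference $A-A^E$ reduces to the three controlled terms $h(\partial_i\partial_jF,\nu)$, $g^E(\partial_i\partial_jF,\nu-\nu^E)$ and the Christoffel term of size $O(|\partial h|)=O(|x|)$ --- is exactly the right device, and the final absorption $|A^E|\le 2|A|+C|x|$ is handled correctly. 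The only caveat worth recording is that both the inversion in (iii) and the bound $|\Gamma^g|\le C|\partial h|$ (which needs $g^{-1}$ bounded), as well as the absorption step $C|x|^2<\tfrac12$, require $h_0\rho^2$ small; the lemma as stated hides this, but the paper explicitly allows $\rho$ to shrink in Section 3, and for $|x|$ bounded below the first three estimates are trivially true with a larger constant, so this is a presentational point rather than a gap.
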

In the sequel we will use the big-$O$ notation. By the statement $f=
O(r^\al)$ we mean that for any $r_0>0$ there exists a constant
$C<\infty$ such that $|f| \leq Cr^\al$ provided that $r<r_0$.

Observe that the area of a surface in $B_\rho$ is bounded in terms of
$\rho$ and the Willmore functional. This lemma is a slight
generalization of \cite[Lemma 1.1]{simon:1993}.
\begin{lemma}
  \label{thm:area_estimate}
  Let $g=g^E+h$ on $B_\rho$ be given. Then there exists $0<\rho_0
  <\rho$ and a constant $C$ depending only on $\rho$ and $h_0$ such
  that for all surfaces $\Sigma\subset B_r$ with $r<\rho_0$ we have
  \begin{equation*}
    |\Sigma| \leq C r^2 \int_\Sigma H^2\dmu.
  \end{equation*}
\end{lemma}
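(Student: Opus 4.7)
The plan is to adapt Simon's classical Euclidean argument to the Riemannian setting by using the radial vector field $X := x^i \del_i$ in the normal coordinates centered at $p$ as a test vector field in the divergence theorem on $\Sigma$.

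I would first record two key properties of $X$. By Gauss's lemma one has $g_{ij}(x)x^ix^j = |x|^2$ exactly, so that $|X|_g = |x|$, and in particular $|g(X,\nu)| \leq r$ whenever $\Sigma \subset B_r$. Second, since the bounds \eqref{eq:3} on $h$ imply $|\Gamma^i_{jk}(x)| \leq C|x|$ in normal coordinates, a direct computation gives
\[
(\nabM_V X)^i = V^i + V^j\Gamma^i_{jk}(x)\,x^k = V^i + O(|x|^2)|V|
\]
for any tangent vector $V$. Combining this with $g_{ij} = \delta_{ij} + O(|x|^2)$ one obtains
\[
g(\nabM_V X, W) = g(V,W) + O(|x|^2)|V|_g|W|_g,
\]
and tracing over a $g$-orthonormal tangent frame on $\Sigma \subset B_r$ yields the pointwise identity
\[
\divSig X = 2 + O(r^2).
\]

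The conclusion then comes from the standard surface divergence identity on the closed $\Sigma$,
\[
\int_\Sigma \divSig X \dmu = \int_\Sigma H\,g(X,\nu)\dmu,
\]
obtained by splitting $X$ into its tangential and normal parts and observing that $\divSig X^T$ integrates to zero while $\divSig(f\nu) = fH$. Substituting the pointwise estimate on $\divSig X$ and using $|g(X,\nu)| \leq r$ gives
\[
(2 - Cr^2)|\Sigma| \leq r\int_\Sigma |H|\dmu \leq r\,|\Sigma|^{1/2}\Bigl(\int_\Sigma H^2\dmu\Bigr)^{1/2}
\]
by Cauchy--Schwarz. Choosing $\rho_0$ so small that $Cr^2 \leq 1$ for $r < \rho_0$ and squaring produces $|\Sigma| \leq r^2\int_\Sigma H^2\dmu$.

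The whole argument parallels Simon's Euclidean proof, with the Riemannian corrections entering only as $O(r^2)$ perturbations absorbed at the final step. I do not anticipate any serious obstacle: the only points deserving care are the identity $|X|_g = |x|$ from Gauss's lemma and the expansion $\nabM X = \id + O(|x|^2)$, both of which reduce to the standard pointwise bound on the Christoffel symbols in normal coordinates inherited from \eqref{eq:3}.
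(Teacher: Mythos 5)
Your proof is correct and follows essentially the same route as the paper: testing the surface divergence identity with the position vector field, estimating $\divSig x = 2 + O(r)$ (you obtain the slightly sharper $O(r^2)$, which is fine but not needed), applying Cauchy--Schwarz, and absorbing the error for small $r$. No gaps.
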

\begin{proof}
  Let $\Sigma \subset B_r$ be a hypersurface for some $r\leq \rho$.
  We consider the position vector field $x$ on $B_\rho$. Then we have
  \begin{equation*}
    \div_\Sigma x = 2 + O(|x|)
  \end{equation*}
  where $\div_\Sigma$ means the tangential divergence along
  $\Sigma$. Integrating this relation yields
  \begin{equation*}
    2 |\Sigma| = \int_\Sigma \div_\Sigma x \dmu + |\Sigma| O(r).
  \end{equation*}
  Since
  \begin{equation*}
    \int_\Sigma \div_\Sigma x \dmu
    =
    \int_\Sigma H \la x, \nu \ra \dmu
    \leq
    \left(\int_\Sigma H^2 \dmu\right)^{1/2}
    \left(\int_\Sigma |\la x, \nu \ra|^2\dmu\right)^{1/2}
  \end{equation*}
  and $|\la x,\nu \ra| \leq r$, we find that
  \begin{equation*}
    |\Sigma|
    \leq
    r |\Sigma|^{1/2} \left(\int_\Sigma H^2 \dmu\right)^{1/2}
    + Cr |\Sigma|.
  \end{equation*}
  Now we can fix $\rho_0$ small, so that for all $0< r<\rho_0$ the
  the second term on the right can be absorbed to the left. This
  yields the claimed inequality.
\end{proof}
For the subsequent curvature estimates we also need a version of the
Michael-Simon-Sobolev inequality. It follows from the Euclidean
version of the inequality \cite{Michael-Simon:1973} in conjunction
with lemma~\ref{thm:metric_change_estimate} to change to the
respective quantities to the $g$-metric.
\begin{lemma}
  \label{thm:michael-simon-soboloev}
  Let $g=g^E+h$ on $B_\rho$ be given. Then there exists $0<\rho_0
  <\rho$ and a constant $C$ depending only on $\rho$ and
  $h_0$ such that for all surfaces
  $\Sigma\subset B_{\rho_0}$ with $\|H\|_{L^2(\Sigma)}<\infty$ and all $f\in C^\infty(\Sigma)$ we have
  \begin{equation*}
    \left(\int_\Sigma f^2 \dmu \right)^{1/2}
    \leq
    C \int_\Sigma |\nabla f| + |Hf| \dmu.
  \end{equation*}
\end{lemma}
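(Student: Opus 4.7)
The plan is to apply the Euclidean Michael--Simon--Sobolev inequality of~\cite{Michael-Simon:1973} to $\Sigma$ viewed as a surface in the Euclidean ball $(B_{\rho_0},g^E)$, and then convert every Euclidean quantity to its $g$-counterpart using the pointwise comparisons from Lemma~\ref{thm:metric_change_estimate}. The Euclidean version gives, with a universal constant $C_0$,
$$\left(\int_\Sigma f^2\,\rmd\mu^E\right)^{1/2}\leq C_0\int_\Sigma\bigl(|\nabla^E f|+|H^E f|\bigr)\,\rmd\mu^E,$$
and it is applicable because the hypothesis $\|H\|_{L^2(\Sigma)}<\infty$, together with the bound $|H^E-H|\leq C(|x|+|x|^2|A|)$ established below and Lemma~\ref{thm:area_estimate}, implies $\|H^E\|_{L^2(\Sigma)}<\infty$ as well.

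By Lemma~\ref{thm:metric_change_estimate} one has $\rmd\mu^E=(1+O(|x|^2))\dmu$ and $|\nabla^E f|=(1+O(|x|^2))|\nabla f|$, so for $\rho_0$ small the measures and intrinsic gradients in the two metrics differ by a multiplicative factor arbitrarily close to $1$. The same lemma, combined with $|A-A^E|\leq C(|x|+|x|^2|A|)$ and $|\gamma-\gamma^E|\leq C|x|^2$, gives $|H^E-H|\leq C|x|+C|x|^2|A|$. Substituting these pointwise comparisons into the Euclidean inequality yields
$$\|f\|_{L^2(\Sigma)}\leq C\int_\Sigma\bigl(|\nabla f|+|Hf|\bigr)\dmu+C\rho_0\int_\Sigma|f|\dmu+C\rho_0^2\int_\Sigma|A|\,|f|\dmu,$$
with constants depending only on $\rho$ and $h_0$.

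The remaining and main technical step is to absorb the two error terms on the right into the left-hand side. For the $|A|$-term I would use the decomposition $|A|\leq|\Acirc|+|H|/\sqrt 2$: the $|H|$-part merges directly into the main $\int|Hf|\dmu$ term, while the $|\Acirc|$-part and the $\int|f|\dmu$ contribution are estimated via Cauchy-Schwarz together with Lemma~\ref{thm:area_estimate}, which supplies $|\Sigma|^{1/2}\leq C\rho_0\|H\|_{L^2(\Sigma)}$. The main obstacle is that $\|\Acirc\|_{L^2(\Sigma)}$ and $\|H\|_{L^2(\Sigma)}$ are not a priori bounded by the ambient geometric data, so the absorption cannot be done by a single Cauchy-Schwarz step. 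The remedy is to shrink $\rho_0$ via a bootstrap: by choosing $\rho_0$ small enough in terms of $\rho$ and $h_0$, all prefactors multiplying $\|f\|_{L^2(\Sigma)}$ on the right can be made strictly less than $1$, which closes the estimate and gives the claimed inequality with a constant $C$ depending only on $\rho$ and $h_0$.
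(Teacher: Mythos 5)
Your route --- apply the Euclidean Michael--Simon inequality and convert each quantity via Lemma~\ref{thm:metric_change_estimate} --- is the same one the paper's one-sentence justification gestures at, but your final absorption step does not close, and you have in fact identified the obstruction yourself without resolving it. After the conversion the two error terms are controlled by $C\rho_0|\Sigma|^{1/2}\|f\|_{L^2(\Sigma)}$ and $C\rho_0^2\|\Acirc\|_{L^2(\Sigma)}\|f\|_{L^2(\Sigma)}$ (the $|H|$-part of $|A|$ being harmless, as you say). The lemma is asserted for \emph{all} surfaces $\Sigma\subset B_{\rho_0}$ with merely $\|H\|_{L^2(\Sigma)}<\infty$, and neither $|\Sigma|$ nor $\|\Acirc\|_{L^2(\Sigma)}$ is controlled by $\rho$ and $h_0$: a surface confined to a small ball can have arbitrarily large area and second fundamental form. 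Since $\rho_0$ must be fixed before $\Sigma$ is handed to you, no choice of $\rho_0$ makes the prefactors $\rho_0|\Sigma|^{1/2}$ and $\rho_0^2\|\Acirc\|_{L^2(\Sigma)}$ uniformly less than $1$; there is nothing to bootstrap. Lemma~\ref{thm:area_estimate} only trades $|\Sigma|^{1/2}$ for $C\rho_0\|H\|_{L^2(\Sigma)}$, which is again uncontrolled, and a product $\|H\|_{L^2}\|f\|_{L^2}$ cannot be converted back into $\int_\Sigma|Hf|\dmu$ (take $f$ supported where $H$ is small).

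The way to obtain the stated uniform constant is not to compare $H$ with $H^E$ at all, but to rerun the Michael--Simon monotonicity argument directly in the metric $g$ (the Hoffman--Spruck strategy). The tangential divergence identity $\int_\Sigma \div_\Sigma X\dmu=\int_\Sigma H\,g(X,\nu)\dmu$ holds exactly for the $g$-quantities, and for $X=\phi(|x-\xi|)(x-\xi)$ the bound~\eqref{eq:3} on the Christoffel symbols gives $\div_\Sigma(x-\xi)=2+O(\rho_0|x-\xi|)$; every error produced by the metric perturbation then carries a factor $|x-\xi|\le 2\rho_0$ multiplying the very quantity being estimated, so it absorbs once $\rho_0$ is small depending only on $h_0$, with no appearance of $|A|$ or of $|\Sigma|$. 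If instead you wish to keep the endpoint-conversion argument, you must weaken the statement by adding hypotheses such as $|\Sigma|\le c$ and $\|A\|_{L^2(\Sigma)}\le c$ (with $C$ then depending on $c$); that weaker version would still suffice for every use of the lemma in the paper, since those bounds are supplied by Lemma~\ref{thm:initial-estimate} before the Sobolev inequality is ever invoked --- but it is not the statement as written.
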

\subsection{Almost umbilical surfaces}
Subsequently it is necessary to approximate a given surface $\Sigma$
by a Euclidean sphere. The main tool will be the following theorem
from \cite{DeLellis-Muller:2005} and \cite{DeLellis-Muller:2006}. We
denote the $L^2$-norm of the trace free part of the second fundamental
form by
\begin{equation*}
  \|\Acirc^E\|^2_{L^2(\Sigma,\gamma^E)}
  =
  \int_\Sigma |\Acirc^E|_E^2\dmu^E,  
\end{equation*}
where all geometric quantities are with respect to the Euclidean
background. In addition we denote by
\begin{equation*}
  \|\Acirc\|^2_{L^2(\Sigma,\gamma)}
  =
  \int_\Sigma |\Acirc|^2\dmu,  
\end{equation*}
the norm of the same tensor, where all geometric quantities are
calculated with respect to the background metric $g$. The following
theorem is a purely Euclidean theorem.
\begin{theorem}
  \label{thm:delellis-muller}
  There exists a universal constant $C$ with the following properties.
  Assume that $\Sigma\subset\IR^3$ is a surface with
  $\|\Acirc^E\|^2_{L^2(\Sigma,\gamma^E)}< 8\pi$.  Let $R^E :=
  \sqrt{|\Sigma|^E/4\pi}$ be the Euclidean area radius of $\Sigma$ and
  $a^E := |\Sigma|_E^{-1} \int_\Sigma x \dmu^E$ be the Euclidean
  center of gravity. Then there exists a conformal map $\psi:
  S:=S_{R^E}(a^E) \to \Sigma\subset\IR^3$ with the following
  properties. Let $\gamma^S$ be the standard metric on $S$, $N$ the
  Euclidean normal vector field and $h$ the conformal factor, that is
  $\psi^*\gamma^E = h^2 \gamma^S$. Then the following estimates hold
  \begin{align*}
    \| H^E - 2/R^E \|_{L^2(\Sigma,\gamma^E)}
    &\leq
    C \|\Acirc^E\|_{L^2(\Sigma,\gamma^E)}
    \\
    \|\psi - \id_S \|_{L^\infty(S)}
    &\leq
    C R^E \|\Acirc^E\|_{L^2(\Sigma,\gamma^E)}
    \\
    \|h^2 -1\|_{L^\infty(S)}
    &\leq
    C R^E \|\Acirc^E\|_{L^2(\Sigma,\gamma^E)}
    \\
    \|N - \nu^E\circ\psi\|_{L^2(\Sigma,\gamma^E)}
    &\leq
    C R^E \|\Acirc^E\|_{L^2(\Sigma,\gamma^E)}.
  \end{align*}
\end{theorem}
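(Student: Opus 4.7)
\emph{Proof plan.} After dilating so that $R^E = 1$ and translating so that $a^E = 0$, the argument splits into an integral identity that yields the mean curvature estimate, and an analysis of a conformal parametrization from the round sphere $S$ to $\Sigma$.

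For the mean curvature estimate, the Gauss equation in $\IR^3$ reads $K_\Sigma = \tfrac14 (H^E)^2 - \tfrac12 |\Acirc^E|^2$, and Gauss-Bonnet combined with the hypothesis $\|\Acirc^E\|_{L^2}^2 < 8\pi$ forces $\Sigma$ to be a topological sphere (positive genus would yield $\int (H^E)^2 < 16\pi$, contradicting Willmore's inequality, and genus $\geq 2$ yields a negative integral already). Integrating the Gauss equation gives
\begin{equation*}
  \int_\Sigma (H^E)^2\,\dmu^E = 16\pi + 2\|\Acirc^E\|^2_{L^2(\Sigma,\gamma^E)}.
\end{equation*}
Combining this with $\int_\Sigma 4/(R^E)^2\,\dmu^E = 16\pi$ and a Minkowski-type lower bound $\int_\Sigma H^E \, \dmu^E \geq 8\pi - C\|\Acirc^E\|_{L^2}^2$, obtained by testing $\div_\Sigma x = 2$ against $H^E$ and applying Cauchy-Schwarz, yields $\|H^E - 2/R^E\|_{L^2} \leq C\|\Acirc^E\|_{L^2}$.

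For the remaining estimates, uniformization produces a conformal diffeomorphism $\psi : S \to \Sigma$ with $\psi^*\gamma^E = h^2 \gamma^S$, unique up to the M\"obius group of $S$. We fix this ambiguity by requiring that the $h^2$-weighted center of mass of $\psi$ equals $a^E = 0$ (H\'elein's gauge). The logarithm $u = \log h$ then satisfies the Liouville equation $-\Delta_S u + 1 = K_\Sigma \mathrm{e}^{2u}$, and since $K_\Sigma \mathrm{e}^{2u}\,\mathrm{d}vol^S = K_\Sigma \dmu^E$, the hypothesis bounds the $L^1$-deviation of the right-hand side from $1$ by $C\|\Acirc^E\|_{L^2}^2$. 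A Wente / M\"uller-\v{S}ver\'ak Hardy-space estimate then produces $\|u\|_{L^\infty(S)} \leq C\|\Acirc^E\|_{L^2}$, which is the bound on $h^2 - 1$. Conformality with $h \approx 1$ gives $\|d(\psi - \id_S)\|_{L^\infty}$ small; the Poincar\'e inequality on $S$ together with the centering normalization then upgrades this to $\|\psi - \id_S\|_{L^\infty} \leq C R^E\|\Acirc^E\|_{L^2}$. Finally, the normal estimate follows by expressing $\nu^E \circ \psi$ as a normalized cross product of $d\psi$ applied to a $\gamma^S$-orthonormal tangent frame and comparing with the analogous expression for $N$.

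\emph{Main obstacle.} The decisive step is the $L^\infty$ bound on $u$ in the conformal-factor step: a naive Sobolev embedding from the $L^1$-type right-hand side of the Liouville equation just fails in two dimensions, so one must exploit the compensated-compactness structure (membership of $K_\Sigma \mathrm{e}^{2u}$ in the local Hardy space), which is precisely where the choice of M\"obius gauge plays a critical role.
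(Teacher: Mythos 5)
First, a point of comparison: the paper does not prove this statement at all. It is quoted as a ``purely Euclidean theorem'' taken from De Lellis--M\"uller \cite{DeLellis-Muller:2005,DeLellis-Muller:2006}, so there is no in-paper argument to measure your proposal against; what you have written is an attempted reproof of that external result, and it contains a genuine gap at its first and most important step.

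The gap is the mean curvature estimate. Normalizing $R^E=1$, Gauss--Bonnet indeed gives $\int_\Sigma (H^E)^2\dmu^E = 16\pi + 2\|\Acirc^E\|^2_{L^2}$, but expanding $\int_\Sigma (H^E-2)^2\dmu^E = \int_\Sigma (H^E)^2\dmu^E - 4\int_\Sigma H^E\dmu^E + 4|\Sigma|^E$ shows that everything hinges on the lower bound $\int_\Sigma H^E\dmu^E \ge 8\pi - C\|\Acirc^E\|^2_{L^2}$, and your proposed derivation of it does not work. Testing $\div_\Sigma x = 2$ gives the identity $\int_\Sigma H^E\la x,\nu^E\ra\dmu^E = 2|\Sigma|^E$; to convert this into a lower bound for $\int_\Sigma H^E\dmu^E$ you would need the pointwise bound $\la x,\nu^E\ra \le 1 + C\|\Acirc^E\|_{L^2}$ \emph{and} $H^E>0$. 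The former is essentially the $L^\infty$ estimate for $\psi-\id_S$ that you are trying to prove (so the argument is circular), and the latter is not a hypothesis of the theorem. Cauchy--Schwarz applied to $\int H^E\la x,\nu^E\ra\dmu^E$ only produces upper bounds in terms of $\|H^E\|_{L^2}$, never a lower bound for $\int H^E\dmu^E$. This is exactly the hard point of the De Lellis--M\"uller theorem: the $L^2$ closeness of $H^E$ (in fact of the full second fundamental form $A^E$) to that of a round sphere is the main result of \cite{DeLellis-Muller:2005}, and its proof is a substantial argument involving the Codazzi equations and a fine decomposition of trace-free symmetric two-tensors, not a short consequence of Gauss--Bonnet plus a divergence identity.

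The second half of your sketch (uniformization, the Liouville equation for $u=\log h$, M\"obius gauge fixing, Wente/M\"uller--\v{S}ver\'ak Hardy-space estimates) is broadly the correct strategy for the $C^0$ conclusions and matches the route of \cite{DeLellis-Muller:2006}. But note that this part consumes the first estimate as input: to get $\|u\|_{L^\infty(S)} \le C\|\Acirc^E\|_{L^2}$ with the stated \emph{linear} dependence one needs $K_\Sigma\dmu^E$ to be quantitatively close to the round measure, and since $K_\Sigma - 1 = \tfrac14(H^E-2)(H^E+2) - \tfrac12|\Acirc^E|^2$, the relevant $L^1$ bound is of order $\|H^E-2\|_{L^2}$, not $\|\Acirc^E\|_{L^2}^2$ as you assert. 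So the missing mean curvature estimate propagates into the conformal-factor step as well, and the proposal as written does not close.
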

To apply the previous theorem we need to estimate
$\|\Acirc^E\|_{L^2(\Sigma,\gamma^E)}$ in terms of
$\|\Acirc\|_{L^2(\Sigma,\gamma)}$. This is the content of the
following lemma.
\begin{lemma}
  \label{thm:euclidean-umbilic}
  Let $g=g^E+h$ on $B_\rho$ be given. Then there exists $0<\rho_0 <\rho$
  and a constant $C$ depending only on $\rho$ and $h_0$ such that for all surfaces $\Sigma\subset
  B_r$ with $r<\rho_0$ we have
  \begin{equation*}
    \begin{split}
      \|\Acirc^E\|^2_{L^2(\Sigma,\gamma^E)}
      % &\leq
      % C \|\Acirc\|^2_{L^2(\Sigma,\gamma)}
      % + Cr^2\big( 1 + \|H\|_{L^2(\Sigma,\gamma)}\big)
      % \|\Acirc\|_{L^2(\Sigma,\gamma)}    
      % \\
      &\leq
      C \|\Acirc\|^2_{L^2(\Sigma,\gamma)} + Cr^4 \|H\|^2_{L^2(\Sigma,\gamma)}
    \end{split}
  \end{equation*}
\end{lemma}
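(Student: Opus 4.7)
The plan is to prove the inequality pointwise up to an unavoidable constant error, and then absorb that error upon integration using the area estimate of Lemma~\ref{thm:area_estimate}.

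First I would unpack the comparisons from Lemma~\ref{thm:metric_change_estimate} into estimates for the quantities building $\Acirc^E$. Writing $\Acirc^E_{ij} - \Acirc_{ij} = (A^E_{ij}-A_{ij}) - \tfrac12(H^E\gamma^E_{ij}-H\gamma_{ij})$, and using $|\gamma-\gamma^E|\le Cr^2$, $|A-A^E|\le C(r+r^2|A|)$, one obtains $|H^E-H|\le Cr+Cr^2|A|$ by expanding $H^E-H = ((\gamma^E)^{ij}-\gamma^{ij})A^E_{ij} + \gamma^{ij}(A^E_{ij}-A_{ij})$. Combining these yields the pointwise bound
\begin{equation*}
  |\Acirc^E - \Acirc|_E \leq Cr + Cr^2|A| + Cr^2|H|.
\end{equation*}
Since $|A|^2 = |\Acirc|^2 + \tfrac12 H^2$, squaring and using $|\Acirc|_E^2 \le (1+Cr^2)|\Acirc|^2$ (the norm change introduces only an $O(r^2)$ factor on a $(0,2)$-tensor) gives the pointwise estimate
\begin{equation*}
  |\Acirc^E|_E^2 \leq C|\Acirc|^2 + Cr^2 + Cr^4 H^2.
\end{equation*}

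Next I would integrate this inequality against $\dmu^E$. The two favorable terms convert to the desired terms on the right-hand side: $\int |\Acirc|^2 \dmu^E \le C\int|\Acirc|^2\dmu = C\|\Acirc\|^2_{L^2(\Sigma,\gamma)}$ via $\dmu^E = (1+O(r^2))\dmu$, and $\int Cr^4 H^2 \dmu^E \le Cr^4\|H\|^2_{L^2(\Sigma,\gamma)}$ in the same way.

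The delicate piece is the residual constant term $\int Cr^2\dmu^E$, which is bounded by $Cr^2|\Sigma|^E \le Cr^2|\Sigma|$ (again using $\dmu^E=(1+O(r^2))\dmu$). This is where Lemma~\ref{thm:area_estimate} is decisive: it gives $|\Sigma| \le Cr^2\int_\Sigma H^2\dmu$, so that
\begin{equation*}
  \int Cr^2 \dmu^E \leq Cr^4 \|H\|^2_{L^2(\Sigma,\gamma)},
\end{equation*}
which is exactly the form of the second summand on the right of the claimed inequality. Combining the three integrated contributions yields the lemma.

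The main obstacle is precisely this residual $Cr^2$ term, which is not proportional to any curvature quantity and cannot be controlled pointwise; the use of the area estimate to convert area into $L^2$-mean-curvature (with a favorable $r^2$ factor) is what allows it to be merged into the $Cr^4\|H\|^2$ term rather than appearing as a separate additive constant.
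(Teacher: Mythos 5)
Your proof is correct and follows the route the paper intends: the paper's own proof is just the one-line remark that the lemma is ``a straightforward consequence of lemma~\ref{thm:metric_change_estimate} and the Cauchy-Schwarz inequality'', and you have filled in exactly those details. You also correctly identified the one non-obvious point, namely that the residual $Cr^2|\Sigma|$ term coming from the $C|x|$ part of $|A-A^E|\leq C(|x|+|x|^2|A|)$ must be absorbed into $Cr^4\|H\|^2_{L^2(\Sigma,\gamma)}$ via the area estimate of lemma~\ref{thm:area_estimate}, a step the paper's terse proof leaves implicit but which is genuinely needed to reach the stated right-hand side.
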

\begin{proof}
  This is a straightforward consequence of
  lemma~\ref{thm:metric_change_estimate} and the Cauchy-Schwarz
  inequality.
\end{proof}

%%% Local Variables: 
%%% mode: latex
%%% TeX-master: "master"
%%% ispell-local-dictionary: "en_US"
%%% End: 

%
\section{A priori estimates}
\label{sec:a-priori-estimates}
A crucial ingredient in the proof of theorem \ref{thm:intro1} is an estimate for the $L^2$-norm
of the traceless part of the second fundamental form of $\Sigma$. This
allows us to control the shape of the surface $\Sigma$ in view of
theorem~\ref{thm:delellis-muller}.

Throughout this section we assume that the metric $g = g^E +h$ is
fixed on $B_\rho$. We furthermore assume that $\rho$ is chosen so
small that lemmas~\ref{thm:metric_change_estimate},
\ref{thm:area_estimate} and \ref{thm:michael-simon-soboloev} can be
applied to any surface in $B_\rho$. We allow $\rho$ to
shrink as it becomes necessary. All surfaces we consider here are of
Willmore type, i.e.\ they satisfy equation~\eqref{eq:4} for some
$\lambda$ and are furthermore contained in $B_\rho$.

Subsequently all constants $C$ may depend on $\rho$ and $h_0$ without further notice. In addition these
constants are allowed to change from line to line.

\subsection{The initial estimate for $\CU$}
\label{sec:initial-estimate-U}
\begin{lemma}
  \label{thm:initial-estimate}
  Let $g=g^E+h$ on $B_\rho$ be given. Then for each $\eps_0\in[0,1)$
  there exists $0<\rho_0 <\rho$ and a constant $C$ with the
  following properties.  If $\eps<\eps_0$ and $\Sigma$ is of Willmore type with
  multiplier $\lambda$ in $B_{r}$ with $r<\rho_0$ and
  \begin{enumerate}
  \item $\Sigma$ is a topological sphere,
  \item $\lambda \geq -\eps/|\Sigma|$,
  \item $H>0$ on $\Sigma$.
  \end{enumerate}
  Then
  \begin{align*}
    &\int_\Sigma |\Acirc|^2 + |\nabla \log H|^2 \dmu
    \leq C r^2 + \eps, \\
    &\left|\int_\Sigma H^2 \dmu -16\pi\right| \leq C(\eps+r^2),\\
    &|\Sigma| \leq C r^2,\quad\text{and}\\
    &|\lambda|\leq C(1+\eps/|\Sigma|).
  \end{align*}
\end{lemma}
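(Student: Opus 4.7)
My plan is to exploit the hypothesis $H>0$ to divide the Willmore-type equation \eqref{eq:33} by $H$ and then convert it into an equation for $\log H$, whose Laplacian integrates to zero on the closed surface $\Sigma$. Explicitly, dividing \eqref{eq:33} by $H$ gives
\begin{equation*}
  \frac{\Delta H}{H} + |\Acirc|^2 + \Ric(\nu,\nu) + \lambda = 0,
\end{equation*}
and substituting the pointwise identity $\Delta \log H = \Delta H/H - |\nabla \log H|^2$ yields
\begin{equation*}
  \Delta \log H + |\nabla \log H|^2 + |\Acirc|^2 + \Ric(\nu,\nu) + \lambda = 0.
\end{equation*}
Integrating over the closed $\Sigma$ removes the $\Delta \log H$ term and produces the master identity
\begin{equation*}
  \int_\Sigma |\nabla \log H|^2 + |\Acirc|^2 \dmu
  = -\int_\Sigma \Ric(\nu,\nu)\dmu - \lambda|\Sigma|.
\end{equation*}
Since $|\Ric|$ is bounded and $\lambda \geq -\eps/|\Sigma|$ by hypothesis, the right-hand side is bounded by $C|\Sigma| + \eps$, which is the first estimate modulo establishing $|\Sigma|\leq Cr^2$.

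To bootstrap an area bound, I would use the Gauss equation \eqref{eq:5} integrated over $\Sigma$ together with Gauss-Bonnet for the topological sphere ($\int_\Sigma \ScalSig \dmu = 8\pi$) to get the identity \eqref{eq:17}, i.e.
\begin{equation*}
  \int_\Sigma H^2 \dmu = 16\pi + 2\CU(\Sigma) + 2\CV(\Sigma),
\end{equation*}
with $|\CV(\Sigma)| \leq C|\Sigma|$. Combining the master identity's bound $\CU(\Sigma) \leq C|\Sigma| + \eps$ with Lemma \ref{thm:area_estimate} yields
\begin{equation*}
  |\Sigma| \leq Cr^2 \int_\Sigma H^2\dmu \leq Cr^2\bigl(16\pi + C|\Sigma| + C\eps\bigr).
\end{equation*}
Choosing $\rho_0$ small enough that $Cr^2 \leq 1/2$ lets me absorb the $|\Sigma|$ term on the right, giving $|\Sigma|\leq Cr^2$ (using $\eps<1$).

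Substituting this area bound back into the master identity gives the first estimate, and plugging it into the Gauss-Bonnet identity gives the second. For the bound on $|\lambda|$, I would read it off from the master identity itself: the upper bound
\begin{equation*}
  \lambda|\Sigma|
  = -\int_\Sigma |\nabla \log H|^2\dmu - \CU(\Sigma) - \int_\Sigma \Ric(\nu,\nu) \dmu
  \leq C|\Sigma|
\end{equation*}
yields $\lambda \leq C$ directly, while the lower bound $\lambda \geq -\eps/|\Sigma|$ is the hypothesis. Together these give $|\lambda| \leq C(1 + \eps/|\Sigma|)$.

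The only delicate point is the bootstrap in the area estimate: the constant in Lemma \ref{thm:area_estimate} multiplies a term involving $|\Sigma|$ itself, so I must shrink $\rho_0$ to guarantee absorption. No lower bound on $|\Sigma|$ is needed anywhere, since the hypothesis on $\lambda$ already supplies the negative-$\lambda$ side of the fourth estimate. Beyond this the argument is just careful bookkeeping.
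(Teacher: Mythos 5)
Your proof is correct and follows essentially the same route as the paper: the integrated identity $\int_\Sigma |\nabla\log H|^2+|\Acirc|^2+\Ric(\nu,\nu)+\lambda\,\dmu=0$ (which the paper obtains by multiplying the equation by $H^{-1}$ and integrating by parts, equivalent to your $\Delta\log H$ substitution), then the integrated Gauss equation with Gauss--Bonnet, the area lemma with absorption for small $r$, and the $\lambda$ bound read off from the same identity. No substantive differences.
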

Note that in equation~\eqref{eq:4} the term $\Delta H$ scales like
$|\Sigma|^{-3/2}$ so that the assumption on $\lambda$ implies that the
term $\lambda H$ is of the same order of magnitude as this
leading order term but small in comparison.

The proof is similar to the proof of lemmas 3.1 and 3.3 in
\cite{Lamm-Metzger-Schulze:2009} although the role of the individual
terms is somewhat different.
\begin{proof}
  Multiplying equation~\eqref{eq:4} by $H^{-1}$ and
  integrating by parts gives
  \begin{equation}
    \label{eq:6}
    \int_\Sigma |\nabla \log H|^2 + |\Acirc|^2 + \Ric(\nu,\nu) +
    \lambda \dmu = 0.   
  \end{equation}
  In view of the assumption on $\lambda$ and the fact that $\Ric$ is
  bounded this yields the estimate
  \begin{equation}
    \label{eq:7}
    \int_\Sigma |\nabla \log H|^2 + |\Acirc|^2 \dmu
    \leq C|\Sigma| + \eps.    
  \end{equation}
  Inserting this back into \eqref{eq:6} we get
  \begin{equation*}
 |\lambda|\leq C(1+\eps |\Sigma|^{-1}),
  \end{equation*}
  which yields the last claim of the lemma.

  Integrating \eqref{eq:5} over $\Sigma$ and using \eqref{eq:7} and
  the Gauss-Bonnet theorem we find that
  \begin{equation*}
    \left|\frac 1 2 \int_\Sigma H^2 \dmu-8\pi\right|
      \leq C(|\Sigma| + \eps).
   \end{equation*}
  In view of the area estimate from lemma~\ref{thm:area_estimate},
  this yields an estimate of the form
  \begin{equation*}
    |\Sigma|
    \leq
    C r^2 \int_\Sigma H^2 \dmu
    \leq
    C r^2(1 + |\Sigma|).
  \end{equation*}
  If $r<\rho_0$ is chosen small enough, we can absorb the term
  $Cr^2|\Sigma|$ on the right to the left and obtain the estimates
  \begin{equation*}
    \left|\int_\Sigma H^2 \dmu-16\pi\right| \leq C(\eps+r^2)
  \end{equation*}
  and 
  \begin{equation*}
    |\Sigma|
    \leq
    Cr^2
  \end{equation*}
  which are the second and third claims. Plugging this into
  estimate~\eqref{eq:7}, we obtain the remaining estimate.
\end{proof}
\subsection{An improved estimate for $\CU$}
\label{sec:improved-estimate-U}
The initial estimate from lemma \ref{thm:initial-estimate} allows to
apply the a priori estimates from section 3
in~\cite{Lamm-Metzger-Schulze:2009} to get higher order estimates and
to improve on the initial estimate.
\begin{theorem}
  \label{thm:integral-estimate1}
  Let $g=g^E+h$ on $B_\rho$ be given. Then there exist $\eps>0$,
  $0<\rho_0 <\rho$ and a constant $C$ with the following
  properties.  If $\eps<\eps_0$ and $\Sigma$ is of Willmore type with multiplier
  $\lambda$ in $B_{r}$ with $r<\rho_0$ and
  \begin{enumerate}
  \item $\Sigma$ is a topological sphere,
  \item $\lambda \geq -\eps/|\Sigma|$,
  \item $H>0$ on $\Sigma$.
  \end{enumerate}
  Then
  \begin{equation*}
    \int_\Sigma
    \frac{|\nabla^2 H|^2}{H^2}
    + |\nabla A|^2 +
    |A|^2|\Acirc|^2 \dmu
    \leq
    C \int_\Sigma |\omega|^2 + \big(\Ric(\nu,\nu) + \lambda\big)^2 \dmu.
  \end{equation*}
  Here $\omega = \Ric(\nu,\cdot)^T$ is a the tangential/normal
  component of the ambient Ricci curvature.
\end{theorem}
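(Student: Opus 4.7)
The plan is to test the Euler--Lagrange equation~\eqref{eq:33} against well-chosen quantities, combine the result with a Simons-type identity and a Bochner formula for $\log H$, and close the resulting chain of inequalities by an absorption argument that exploits the smallness $\|\Acirc\|_{L^2(\Sigma,\gamma)}^2 \leq Cr^2+\eps$ supplied by Lemma~\ref{thm:initial-estimate}. This mirrors the scheme of \cite[Section~3]{Lamm-Metzger-Schulze:2009}.

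First I would rewrite~\eqref{eq:33} as $\Delta H/H = -|\Acirc|^2 - (\Ric(\nu,\nu)+\lambda)$, square, and integrate. This immediately produces $\int_\Sigma(\Ric(\nu,\nu)+\lambda)^2\dmu$ on the right and leaves a quartic term $\int_\Sigma|\Acirc|^4\dmu$, which is controlled via Lemma~\ref{thm:michael-simon-soboloev} applied to $|\Acirc|^2$,
$$
\int_\Sigma|\Acirc|^4\dmu
\leq
C\|\Acirc\|_{L^2(\Sigma,\gamma)}^2 \int_\Sigma\bigl(|\nabla\Acirc|^2 + H^2|\Acirc|^2\bigr)\dmu,
$$
whose prefactor is small by Lemma~\ref{thm:initial-estimate}, so the $\int|\nabla\Acirc|^2$ contribution will be absorbable at the end. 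To upgrade $\int(\Delta H)^2/H^2$ into $\int|\nabla^2 H|^2/H^2$ I would apply the Bochner identity to $f=\log H$,
$$
\int_\Sigma |\nabla^2 f|^2\dmu = \int_\Sigma(\Delta f)^2\dmu - \int_\Sigma \Ric^\Sigma(\nabla f,\nabla f)\dmu,
$$
substitute $\Delta f = \Delta H/H - |\nabla f|^2$, and use the Gauss equation~\eqref{eq:5} to express the intrinsic Ricci curvature of $\Sigma$ in terms of ambient curvature and the pointwise invariants $H^2$, $|\Acirc|^2$. The remainder $\int|A|^2|\nabla\log H|^2$ is handled again by Michael--Simon--Sobolev together with the available bound on $\int|\nabla\log H|^2$ from Lemma~\ref{thm:initial-estimate}.

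For $\int|\nabla A|^2$ and $\int|A|^2|\Acirc|^2$ I would invoke the Simons identity for a hypersurface in a three-manifold, schematically
$$
\Delta \Acirc = (\nabla^2 H)^\circ + H\,\Acirc\ast\Acirc + |\Acirc|^2 \Acirc + \CR,
$$
where the remainder $\CR$ is linear in $\omega = \Ric(\nu,\cdot)^T$, $\Ric(\nu,\nu)$ and $\nabla\Riem$, and arises from the commutators of covariant derivatives and from Codazzi. Pairing with $\Acirc$, integrating by parts, and applying Cauchy--Schwarz to the Hessian term with weight $H$ so that $|\nabla^2 H|^2/H^2$ appears, yields an inequality of the form
$$
\int_\Sigma|\nabla\Acirc|^2 + |A|^2|\Acirc|^2\dmu
\leq
C\int_\Sigma\frac{|\nabla^2 H|^2}{H^2}\dmu + C\int_\Sigma|\omega|^2\dmu + \text{absorbable}.
$$
The Codazzi identity then gives $|\nabla A|^2 = |\nabla\Acirc|^2 + \tfrac12|\nabla H|^2 + O(|\Riem|\,|A|)$, which transfers the estimate to $|\nabla A|^2$.

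The main obstacle is that $\int|\nabla\Acirc|^2$ and $\int|A|^2|\Acirc|^2$ both reappear on the right-hand side of the intermediate estimates through the Sobolev and Simons steps, so the whole argument closes only by an absorption relying on the initial smallness supplied by Lemma~\ref{thm:initial-estimate}. This is exactly what forces $\eps_0$ and $\rho_0$ to be chosen small (depending only on $h_0$) so that the combined absorption constants stay strictly below~$1$.
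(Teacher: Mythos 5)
Your overall scheme is the same as the one the paper relies on: the Simons/Bochner/Michael--Simon--Sobolev machinery you sketch is precisely the content of Section~3 of \cite{Lamm-Metzger-Schulze:2009}, which the paper simply quotes (as Theorem~3.9 there) to obtain the intermediate estimate~\eqref{eq:7c}; the only part the paper proves in detail is the subsequent absorption. So the route is not genuinely different, but there is one step in your absorption argument that does not work as stated and happens to be exactly the step the paper spells out.

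The issue is the leftover terms that are \emph{quadratic} in $\Acirc$ and $\nabla\log H$ with coefficients of size $\sup|\Ric|$ (these arise from the curvature remainder $\CR$ in the Simons identity and from the intrinsic Ricci term in the Bochner formula), i.e.\ a contribution of the form $C\sup|\Ric|\int_\Sigma |\Acirc|^2+|\nabla\log H|^2\dmu$. You declare such terms ``absorbable'' on the strength of $\|\Acirc\|_{L^2}^2\leq Cr^2+\eps$ from Lemma~\ref{thm:initial-estimate}. That smallness does make the \emph{quartic} terms absorbable (your application of Lemma~\ref{thm:michael-simon-soboloev} to $|\Acirc|^2$, and the analogue \eqref{eq:7d} for $|\nabla\log H|^4$, carry a small prefactor $\|\Acirc\|_{L^2}^2$ resp.\ $\|\nabla\log H\|_{L^2}^2$). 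But for the quadratic terms it only yields an additive constant $C(r^2+\eps)$ on the right, which is not dominated by $C\int_\Sigma|\omega|^2+(\Ric(\nu,\nu)+\lambda)^2\dmu$ --- that integral can be arbitrarily small or zero --- so the target inequality would acquire a spurious additive term. The fix is to apply the Michael--Simon--Sobolev inequality once more, this time to $|\Acirc|$ and to $\nabla\log H$ themselves, to get
\begin{equation*}
  \int_\Sigma|\Acirc|^2\dmu\leq C|\Sigma|\int_\Sigma|\nabla A|^2+H^2|\Acirc|^2\dmu,
  \qquad
  \int_\Sigma|\nabla\log H|^2\dmu\leq C|\Sigma|\int_\Sigma\frac{|\nabla^2H|^2}{H^2}+|\nabla A|^2\dmu,
\end{equation*}
which are \eqref{eq:7a} and \eqref{eq:7b}; the prefactor $|\Sigma|\leq Cr^2$ then lets these quadratic terms be absorbed into the left-hand side for $r<\rho_0$ small. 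With that correction your argument closes as intended.
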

\begin{proof}
  This is a consequence of the calculation in section 3 of
  \cite{Lamm-Metzger-Schulze:2009}. Note that the calculation there
  makes use of the fact that $\|\Acirc\|_{L^2} + \|\nabla\log
  H\|_{L^2}$ can be made arbitrarily small (cf.\ lemma 3.8 there) and
  this is where the initial estimate from section
  \ref{sec:initial-estimate-U} enters here. In particular, theorem 3.9 in \cite{Lamm-Metzger-Schulze:2009}
  implies that in the local situation we have
  \begin{equation}
    \label{eq:7c}
    \begin{split}
      &\int_\Sigma
      \frac{|\nabla^2 H|^2}{H^2}
      + |\nabla A|^2 +
      |A|^2|\Acirc|^2 \dmu
      \\
      &\quad
      \leq
      C \int_\Sigma |\omega|^2 + \big(\Ric(\nu,\nu) + \lambda\big)^2
      \dmu
      \\
      &\quad\quad
      + C \sup_{B_\rho} |\Ric| \ \int_\Sigma |\Acirc|^2 + |\nabla\log
      H|^2 \dmu.
    \end{split}
  \end{equation}
  Next we use the Michael-Simon-Sobolev inequality to estimate
  \begin{equation}
    \label{eq:7a}
    \int_\Sigma|\Acirc|^2 \dmu
    \leq
    C\left(\int_\Sigma |\nabla\Acirc| + H |\Acirc| \dmu\right)^2
    \leq
    C |\Sigma| \int_\Sigma |\nabla A|^2 + H^2 |\Acirc|^2 \dmu.
  \end{equation}
  Similarly we get
  \begin{equation*}
    \begin{split}
      \int_\Sigma |\nabla\log H|^2 \dmu
      &\leq
      \left(\int_\Sigma \frac{|\nabla^2 H|}{H}+|\nabla \log
        H|^2+|\nabla H|\dmu\right)^2
      \\
      &\leq
      C|\Sigma| \int_\Sigma \frac{|\nabla H|^2}{H^2}+|\nabla A|^2+|\nabla\log H|^4\dmu.
    \end{split}
  \end{equation*}
  Applying the Michael-Simon Sobolev inequality once more we have
  \begin{equation*}
    \begin{split}
      &\int_\Sigma|\nabla\log H|^4\dmu      
      \\
      &\quad\leq
      C\left(\int_\Sigma \frac{|\nabla^2 H|}{H}|\nabla \log H|+|\nabla
        \log H|^3+H|\nabla \log H|^2\dmu\right)^2 
      \\
      &\quad\leq
      C\|\nabla \log H\|_{L^2(\Sigma)}^2\int_\Sigma \frac{|\nabla^2 H|^2}{H^2}+|\nabla \log H|^4+|\nabla A|^2\dmu.
    \end{split}
  \end{equation*}
  Using lemma \ref{thm:initial-estimate} we know that for $\eps$ and $r<r_0$ small enough we get
  \begin{equation}
    \label{eq:7d}
    \int_\Sigma|\nabla\log H|^4\dmu \le C ||\nabla \log H||_{L^2(\Sigma)}^2\int_\Sigma \frac{|\nabla^2 H|^2}{H^2}+|\nabla A|^2\dmu.
  \end{equation}
  Inserting this into the above estimate for $\int_\Sigma |\nabla\log H|^2 \dmu$ we conclude
  \begin{equation}
    \label{eq:7b}
    \int_\Sigma |\nabla\log H|^2 \dmu\leq C|\Sigma|  \int_\Sigma  \frac{|\nabla^2 H|^2}{H^2}+|\nabla A|^2\dmu.
  \end{equation}
  Hence we see that for $r<\rho_0$ small enough, we can absorb the second term on the right hand side of \eqref{eq:7c}.
\end{proof}
The remaining task is to estimate the term on the right hand side in
theorem~\ref{thm:integral-estimate1}. We start with the following
calculation.
\begin{lemma}
  \label{thm:calc-ric}
  Assume that the metric $g=g^E +h$ on $B_\rho$ is given. Then there
  exists a constant $C$ such that for all
  surfaces $\Sigma \subset B_r$ we have 
  \begin{equation*}
    \left|\int_\Sigma \Ric(\nu,\nu)\dmu - \frac{|\Sigma|}{3} \Scal(0)
    \right|
    \leq
    C|\Sigma|  \big( \|\Acirc\|_{L^2(\Sigma)} + r\big)  
  \end{equation*}  
\end{lemma}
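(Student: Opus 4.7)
The underlying geometric fact is that on any Euclidean sphere $S\subset\IR^3$ with outer normal $N$ one has $\int_S N^i N^j\,\rmd\sigma = \tfrac{|S|}{3}\delta^{ij}$, hence $\Ric_{ij}(0)\int_S N^i N^j\,\rmd\sigma = \tfrac{|S|}{3}\Scal(0)$. My plan is to reduce the left-hand side of the claim to an integral of this form over the Euclidean sphere produced by theorem~\ref{thm:delellis-muller}, keeping careful track of the errors along the way.

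Noting first that the estimate is trivial when $\|\Acirc\|_{L^2}$ is bounded below (the right-hand side then dominates $C|\Sigma|$, while the left-hand side is bounded by $C|\Sigma|$ since $|\Ric|$ and $|\Scal|$ are bounded), I may assume $\|\Acirc^E\|^2_{L^2(\Sigma,\gamma^E)} < 8\pi$ so that theorem~\ref{thm:delellis-muller} is applicable. Writing $\Ric(\nu,\nu)=\Ric_{ij}(x)\nu^i\nu^j$ in normal coordinates around $p$, I would combine the Taylor estimate $|\Ric_{ij}(x)-\Ric_{ij}(0)|\leq Cr$ with the pointwise bounds $|\nu-\nu^E|\leq C|x|^2$ and $|\rmd\mu-\rmd\mu^E|\leq C|x|^2\,\rmd\mu^E$ from lemma~\ref{thm:metric_change_estimate} to obtain
\begin{equation*}
\int_\Sigma \Ric(\nu,\nu)\dmu = \Ric_{ij}(0)\int_\Sigma \nu^E_i\nu^E_j\,\rmd\mu^E + O(r|\Sigma|).
\end{equation*}

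Next, let $S=S_{R^E}(a^E)$ and $\psi:S\to\Sigma$ be as in theorem~\ref{thm:delellis-muller} with $\psi^*\gamma^E=h^2\gamma^S$. After the change of variables
\begin{equation*}
\int_\Sigma \nu^E_i\nu^E_j\,\rmd\mu^E = \int_S (\nu^E_i\circ\psi)(\nu^E_j\circ\psi)\,h^2\,\rmd\sigma,
\end{equation*}
I would split the difference with $\int_S N^iN^j\,\rmd\sigma$ into a conformal-factor piece controlled by $\|h^2-1\|_{L^\infty}\cdot |S|$ and a normal-deviation piece controlled via Cauchy--Schwarz by $|S|^{1/2}\|\nu^E\circ\psi-N\|_{L^2(S)}$. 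Using the de Lellis--M\"uller bounds together with $R^E\sim r\sim|\Sigma|^{1/2}$, both pieces yield an error of order $|\Sigma|\,\|\Acirc^E\|_{L^2}$. Since $\int_S N^iN^j\,\rmd\sigma = \tfrac{|S|}{3}\delta^{ij}$ and $|S|=|\Sigma|+O(r^2|\Sigma|)$, contracting with $\Ric_{ij}(0)$ produces $\tfrac{|\Sigma|}{3}\Scal(0)$ modulo the stated errors, and lemma~\ref{thm:euclidean-umbilic} finishes the proof by absorbing $\|\Acirc^E\|_{L^2}$ into $\|\Acirc\|_{L^2}+Cr$ (the contribution $r^2\|H\|_{L^2}$ is dominated by $Cr$ thanks to the bounded Willmore energy).

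The main obstacle I foresee is translating the $L^2$ bound on the normal deviation $N-\nu^E\circ\psi$ into an $L^1$-type bound on the pointwise quadratic expression $\nu^E_i\nu^E_j-N^iN^j$; the Cauchy--Schwarz factor $|S|^{1/2}$ combined with $R^E\sim r\sim|\Sigma|^{1/2}$ is precisely what converts the scaling $R^E\|\Acirc^E\|_{L^2}$ of the normal deviation into the claimed factor $|\Sigma|\,\|\Acirc^E\|_{L^2}$, and book-keeping all the individual $O(r)$ and $O(r^2)$ contributions so that they fit into $C|\Sigma|(\|\Acirc\|_{L^2}+r)$ requires some care.
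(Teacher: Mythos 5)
Your proposal is correct and follows essentially the same route as the paper's proof: reduce to the case where theorem~\ref{thm:delellis-muller} applies, compare $\int_\Sigma \Ric(\nu,\nu)\dmu$ with $\int_S \Ric_0(N,N)\dmu^E = \tfrac{|S|}{3}\Scal(0)$ via the metric-comparison bounds of lemma~\ref{thm:metric_change_estimate}, the conformal parametrization estimates (with Cauchy--Schwarz for the normal deviation), and lemma~\ref{thm:euclidean-umbilic}. The only difference is the order in which you freeze $\Ric$ at the origin versus transfer the integral to the sphere, which is immaterial.
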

\begin{proof}
  Note that if either $\|\Acirc\|_{L^2(\Sigma)}$ or $r$ is large, the estimate
  is trivially satisfied, so that it is sufficient to show it in the
  case where theorem~\ref{thm:delellis-muller} is applicable and we
  are furthermore allowed to assume that $0<r<1$. We use 
  theorem~\ref{thm:delellis-muller} to approximate $\Sigma$ by a
  Euclidean sphere $S = S_{a^E}(R^E)$ with $a^E\in B_r(0)$ and $R^E =
  \sqrt {|\Sigma|^E/4\pi}$. Since
  \begin{equation*}
    \big||\Sigma|^E - |\Sigma|\big|
    \leq
    \int_\Sigma |\dmu^E -\dmu|\dmu
    \leq
    Cr^2|\Sigma|
  \end{equation*}
  we infer that
  \begin{equation*}
    |R^E - R| \leq CrR
  \end{equation*}
  where $R = \sqrt{|\Sigma|/4\pi}$. It is well known that
  \begin{equation*}
    \int_S \Ric_0(N,N) \dmu^E
    =
    \frac{|\Sigma|^E}{3} \Scal(0)
  \end{equation*}
  where we use the notation of theorem~\ref{thm:delellis-muller},
  that is $N$ is the Euclidean normal of $S$. Furthermore $\Ric_0$
  denotes the Ricci-tensor of $M$ evaluated at the origin.

  The first step is to estimate
  \begin{equation*}
    \begin{split}
      &\left|\int_S \Ric(N,N) \dmu^E - \int_S \Ric_0(N,N)
        \dmu^E\right|
      \\
      &\quad
      \leq
      |\Sigma|^E \sup_{p\in S} |\Ric_p(N,N) - \Ric_0(N,N)|     
      \leq C|\Sigma|^E r
    \end{split}
  \end{equation*}
  and therefore
  \begin{equation*}
    \left|\int_S \Ric(N,N) \dmu^E - \frac{|\Sigma|^E}{3}
      \Scal(0)\right|
    \leq C r |\Sigma|.
  \end{equation*}
  In the next step we estimate $\int_\Sigma \Ric(\nu,\nu)$ in terms of
  $\int_S\Ric(N,N)\dmu^E$. To this end note that
  \begin{equation*}
    \left| \int_\Sigma \Ric(\nu,\nu)\dmu - \int_\Sigma \Ric(\nu,\nu)
      \dmu^E \right|
    \leq C |\Sigma| r^2.
  \end{equation*}
  The resulting integral can be evaluated using the conformal
  parametrization $\psi: S\to\Sigma$ from
  theorem~\ref{thm:delellis-muller}. We can express
  \begin{equation*}
    \int_\Sigma \Ric(\nu,\nu) \dmu^E = \int_S
    \Ric\circ\psi(\nu\circ\psi, \nu\circ\psi) h^2\dmu^E.
  \end{equation*}
  The estimates of theorem~\ref{thm:delellis-muller} and the
  Cauchy-Schwarz inequality imply that
  \begin{equation*}
    \begin{split}
      &\left|\int_\Sigma \Ric(\nu,\nu) \dmu^E -
        \int_S\Ric(N,N)\dmu^E\right|
      \\
      &\quad
      \leq
      \left| \int_S \big(\Ric\circ\psi -\Ric)(\nu\circ\psi, \nu\circ\psi)
        h^2 \dmu^E \right|
      \\
      &\qquad
      +
      \left| \int_S \Ric(\nu\circ\psi -N, \nu \circ \psi) h^2 \dmu^E
      \right|
      \\
      &\qquad
      +
      \left| \int_S \Ric(N, \nu \circ \psi-N) h^2 \dmu^E
      \right|
      +
      \left| \int_S \Ric(N,N) (h^2 -1)\dmu^E
      \right|
      \\
      &\quad
      \leq
      C|\Sigma| \|\psi-\id\|_{L^\infty(S)}
      + C|\Sigma|^{1/2} \|\nu\circ\psi - N\|_{L^2(S)}
      + C|\Sigma| \|h^2-  1\|_{L^\infty(S)}
      \\
      &\quad
      \leq
      C|\Sigma| \|\Acirc^E\|_{L^2(\Sigma,\gamma^E)}.
    \end{split}
  \end{equation*}
  In combination with lemma~\ref{thm:euclidean-umbilic} we infer
  \begin{equation*}
    \left|\int_\Sigma \Ric(\nu,\nu) \dmu^E -
      \int_S\Ric(N,N)\dmu^E\right|
    \leq
    C|\Sigma| \big( \|\Acirc\|_{L^2(\Sigma)} + r^2\big).
  \end{equation*}
  Collecting all the above estimates results in the estimate
  \begin{equation*}
    \left|\int_\Sigma \Ric(\nu,\nu)\dmu - \frac{|\Sigma|}{3} \Scal(0)
    \right|
    \leq
    C|\Sigma|  \big( \|\Acirc\|_{L^2(\Sigma)} + r\big)     
  \end{equation*}
  which is precisely the claim.
\end{proof}
In the following lemma we derive an estimate for the Lagrange parameter
$\lambda$.
\begin{lemma}
  \label{thm:lambda_estimate}
  Assume that the metric $g=g^E +h$ on $B_\rho$ is given. Then there
  exist $\eps_0$, $r_0<\rho$ and a constant $C$ such that all surfaces $\Sigma \subset
  B_r$ as in the statement of
  theorem~\ref{thm:integral-estimate1} with $\eps<\eps_0$ and $r<r_0$ satisfy 
  \begin{equation*}
    \big| \lambda + \tfrac{1}{3} \Scal(0) \big|
    \leq
    C|\Sigma|^{-1}\big(\|\Acirc\|_{L^2(\Sigma)}^2 + \|\nabla \log H\|_{L^2(\Sigma)}^2
    \big)
    + C r.
  \end{equation*}
  In particular
  \begin{equation*}
    |\lambda|
    \leq
    C|\Sigma|^{-1}\big(\|\Acirc\|_{L^2(\Sigma)}^2 + \|\nabla \log H\|_{L^2(\Sigma)}^2
    \big) + C.
  \end{equation*}
\end{lemma}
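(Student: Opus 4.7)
The starting point is the ``Pohozaev-type'' identity that was already obtained in the proof of lemma~\ref{thm:initial-estimate}: multiplying the Willmore-type equation~\eqref{eq:33} by $H^{-1}$ and integrating by parts over $\Sigma$ yields
\begin{equation*}
  \int_\Sigma |\nabla \log H|^2 + |\Acirc|^2 + \Ric(\nu,\nu) + \lambda \dmu = 0.
\end{equation*}
Dividing by $|\Sigma|$ isolates $\lambda$ and expresses it as the negative average of $\Ric(\nu,\nu)$ plus an error controlled by the energy terms $\|\Acirc\|_{L^2}^2$ and $\|\nabla\log H\|_{L^2}^2$.

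The key point is then to recognize the average of the Ricci term: by lemma~\ref{thm:calc-ric}, applied at the base point $0$ of the normal coordinates on $B_\rho$, we have
\begin{equation*}
  \frac{1}{|\Sigma|}\int_\Sigma \Ric(\nu,\nu)\dmu = \tfrac{1}{3}\Scal(0) + E,
\end{equation*}
with $|E| \leq C\bigl(\|\Acirc\|_{L^2(\Sigma)} + r\bigr)$. Combining this with the identity above immediately gives
\begin{equation*}
  \bigl|\lambda + \tfrac{1}{3}\Scal(0)\bigr|
  \leq |\Sigma|^{-1}\bigl(\|\Acirc\|_{L^2(\Sigma)}^2 + \|\nabla\log H\|_{L^2(\Sigma)}^2\bigr)
  + C\|\Acirc\|_{L^2(\Sigma)} + Cr.
\end{equation*}
The only remaining step is to absorb the linear term $C\|\Acirc\|_{L^2(\Sigma)}$ into the shape desired by the statement. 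For this I would apply Young's inequality in the form $\|\Acirc\|_{L^2(\Sigma)} \leq \tfrac{1}{2}|\Sigma|^{-1}\|\Acirc\|_{L^2(\Sigma)}^2 + \tfrac{1}{2}|\Sigma|$, and then invoke the area estimate $|\Sigma| \leq Cr^2$ from lemma~\ref{thm:initial-estimate} (valid once $\eps<\eps_0$ and $r<r_0$ are sufficiently small) to conclude that the leftover term $\tfrac{1}{2}|\Sigma|$ is bounded by $Cr$. This yields the first claimed estimate.

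The ``in particular'' statement for $|\lambda|$ is then just the triangle inequality, using that $|\Scal(0)|$ is bounded by the assumed curvature bound. I do not expect a genuine obstacle here; the lemma is essentially a rewriting of the first-variation identity once the approximation lemma~\ref{thm:calc-ric} is at hand, the only subtlety being the careful application of Young's inequality combined with the a priori area estimate to match the form of the error term exactly.
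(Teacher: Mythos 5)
Your proof is correct and follows exactly the paper's own argument: the identity from multiplying~\eqref{eq:33} by $H^{-1}$ and integrating (equation~\eqref{eq:6}), lemma~\ref{thm:calc-ric} for the average of $\Ric(\nu,\nu)$, Young's inequality to absorb the linear $\|\Acirc\|_{L^2}$ term, and the area bound $|\Sigma|\leq Cr^2$ from lemma~\ref{thm:initial-estimate}. No differences worth noting.
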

\begin{proof}
  Equation~\eqref{eq:6} implies that
  \begin{equation}
    \label{eq:8}
    \left| \lambda + \frac{1}{|\Sigma|} \int_\Sigma
      \Ric(\nu,\nu)\dmu\right|
    \leq
    |\Sigma|^{-1} \big(\|\Acirc\|_{L^2(\Sigma)}^2 + \|\nabla \log H\|_{L^2(\Sigma)}^2
    \big).
  \end{equation}
  Apply lemma~\ref{thm:calc-ric} to calculate 
  \begin{equation*}
    \left|\frac{1}{|\Sigma|}\int_\Sigma \Ric(\nu,\nu)\dmu - \frac{1}{3} \Scal(0)
    \right|
    \leq
    C \big( \|\Acirc\|_{L^2(\Sigma)} + r\big)   
  \end{equation*}
  and note that
  \begin{equation*}
    \|\Acirc\|_{L^2(\Sigma)}
    =
    |\Sigma|^{1/2} |\Sigma|^{-1/2}\|\Acirc\|_{L^2(\Sigma)}
    \leq
    \half |\Sigma| + \half |\Sigma|^{-1} \|\Acirc\|_{L^2(\Sigma)}^2.
  \end{equation*}
  Since $|\Sigma| \leq Cr^2$ by
  lemma~\ref{thm:initial-estimate}, this implies the claim in
  combination with equation~\eqref{eq:8}.
\end{proof}
\begin{theorem}
  \label{thm:main-estimate}
  Assume that the metric $g=g^E +h$ on $B_\rho$ is given. Then there
  exist $\eps_0>0$, $r_0<\rho$ and a constant $C$ such that all
  surfaces $\Sigma \subset B_r$ as in the statement of
  theorem~\ref{thm:integral-estimate1} with $\eps<\eps_0$ and $r<r_0$ satisfy
  \begin{equation*}
    \int_\Sigma \frac {|\nabla^2H|^2}{H^2} + |\nabla A|^2 +
    |A|^2|\Acirc|^2 \dmu
    \leq
    C|\Sigma|.
  \end{equation*}  
\end{theorem}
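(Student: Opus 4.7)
The plan is to apply Theorem~\ref{thm:integral-estimate1}, which reduces the task to bounding $\int_\Sigma |\omega|^2 + (\Ric(\nu,\nu)+\lambda)^2\dmu$ by $C|\Sigma|$. Since $|\omega|$ and $|\Ric|$ are uniformly bounded on $B_\rho$, the first summand contributes $C|\Sigma|$ immediately. For the $\lambda$-term I would split
$$\Ric(\nu,\nu) + \lambda = \bigl(\Ric(\nu,\nu) - \tfrac{1}{3}\Scal(0)\bigr) + \bigl(\lambda + \tfrac{1}{3}\Scal(0)\bigr).$$
The first summand is pointwise bounded, so its squared $L^2$-norm is at most $C|\Sigma|$. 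The second summand is a spatial constant on $\Sigma$ whose magnitude is controlled by Lemma~\ref{thm:lambda_estimate}, giving
$$\bigl(\lambda + \tfrac{1}{3}\Scal(0)\bigr)^2 |\Sigma| \leq C U^2/|\Sigma| + Cr^2|\Sigma|,$$
where $U := \|\Acirc\|_{L^2(\Sigma)}^2 + \|\nabla\log H\|_{L^2(\Sigma)}^2$. Writing $V$ for the quantity on the left side of the target inequality, this yields the preliminary bound $V \leq C|\Sigma| + C U^2/|\Sigma|$.

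To control the residual term $U^2/|\Sigma|$, a bootstrap is required. The Michael-Simon-Sobolev chain \eqref{eq:7a}--\eqref{eq:7b} used inside the proof of Theorem~\ref{thm:integral-estimate1} establishes $U \leq C|\Sigma|V$. Substituting this into the preliminary bound yields
$$U \leq C|\Sigma|V \leq C|\Sigma|^2 + CU^2,$$
i.e.\ $U(1-CU) \leq C|\Sigma|^2$. By Lemma~\ref{thm:initial-estimate} we can force $U$ to be as small as we like by shrinking $\eps_0$ and $r_0$; in particular we may arrange $CU \leq \tfrac{1}{2}$, whereupon the quadratic term is absorbed and we obtain $U \leq 2C|\Sigma|^2$.

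Plugging this improved bound back into $V \leq C|\Sigma| + CU^2/|\Sigma|$ yields $U^2/|\Sigma| \leq C|\Sigma|^3 \leq C|\Sigma|$, using that $|\Sigma|\leq Cr^2$ is uniformly bounded, and hence $V \leq C|\Sigma|$ as desired. The main difficulty is precisely this self-referencing step: the combined estimates $V \leq C|\Sigma| + CU^2/|\Sigma|$ and $U \leq C|\Sigma|V$ produce a quadratic inequality admitting both a small-solution and a large-solution branch. Performing the bootstrap on $U$ rather than on $V$, and exploiting the quantitative smallness of $U$ already provided by Lemma~\ref{thm:initial-estimate}, is what selects the correct branch.
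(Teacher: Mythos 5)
Your argument is correct and follows essentially the same route as the paper: apply Theorem~\ref{thm:integral-estimate1}, control the $\lambda$-term via Lemma~\ref{thm:lambda_estimate}, feed the Sobolev chain \eqref{eq:7a}--\eqref{eq:7b} back in, and absorb using the smallness guaranteed by Lemma~\ref{thm:initial-estimate}. The only (harmless) difference is where the absorption is performed: the paper closes the loop in the inequality $V\leq C|\Sigma|+C|\Sigma|V^2$ by bounding one factor of $V$ with the crude estimate $V\leq C(|\Sigma|+\eps^2|\Sigma|^{-1})$, whereas you close it in the inequality $U\leq C|\Sigma|^2+CU^2$ using the smallness of $U$, which also yields the bound $U\leq C|\Sigma|^2$ of Corollary~\ref{thm:lambda-bounded} along the way.
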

\begin{proof}
  In view of theorem~\ref{thm:integral-estimate1} and the fact that
  $\Ric$ and $\omega$ are bounded, we infer the estimate
  \begin{equation}
    \label{eq:10}
    \int_\Sigma \frac {|\nabla^2H|^2}{H^2} + |\nabla A|^2 +
    |A|^2|\Acirc|^2 \dmu
    \leq
    C|\Sigma|(1 + \lambda ^2).     
  \end{equation}
  The crucial term to estimate thus is $\lambda^2|\Sigma|$. We use the
  estimate from lemma~\ref{thm:lambda_estimate} to  get
  \begin{equation}
    \label{eq:11}
    \lambda^2|\Sigma|
    \leq
    C|\Sigma|^{-1} \big( \|\Acirc\|_{L^2(\Sigma)}^4 + \|\nabla \log
    H\|_{L^2(\Sigma)}^4) + C|\Sigma|.
  \end{equation}
  Combining \eqref{eq:7a} and \eqref{eq:7b} with \eqref{eq:10}
  and \eqref{eq:11}, we infer
  \begin{equation}
    \label{eq:12}
    \begin{split}
      &\int_\Sigma \frac {|\nabla^2H|^2}{H^2} + |\nabla A|^2 +
      |A|^2|\Acirc|^2 \dmu
      \\
      &\quad
      \leq
      C|\Sigma|
      +
      C|\Sigma| \left( \int_\Sigma \frac {|\nabla^2H|^2}{H^2} + |\nabla A|^2 +
        |A|^2|\Acirc|^2 \dmu\right)^2.
    \end{split}
  \end{equation}
  To proceed note that by equation~\eqref{eq:10} and lemma \ref{thm:initial-estimate} we find that
  \begin{equation*}
    \int_\Sigma \frac {|\nabla^2H|^2}{H^2} + |\nabla A|^2 +
    |A|^2|\Acirc|^2 \dmu
    \leq
    C|\Sigma|(1+\lambda^2)
    \leq C(|\Sigma|+\eps^2|\Sigma|^{-1})
  \end{equation*}
  Using this in equation~\eqref{eq:12} to estimate part of the right
  hand side, we get
  \begin{equation*}
    \begin{split}
      &\int_\Sigma \frac {|\nabla^2H|^2}{H^2} + |\nabla A|^2 +
      |A|^2|\Acirc|^2 \dmu
      \\
      &\quad
      \leq
      C|\Sigma|
      +
      C (|\Sigma|^2+\eps^2) \left( \int_\Sigma \frac {|\nabla^2H|^2}{H^2} + |\nabla A|^2 +
        |A|^2|\Acirc|^2 \dmu\right).
    \end{split}    
  \end{equation*}
  Thus choosing $0<\eps$ and $r<r_0$ small enough, we can absorb the second term on
  the right to the left and infer the claimed estimate.
\end{proof}
\begin{corollary}
  \label{thm:lambda-bounded}
  Assume that the metric $g=g^E +h$ on $B_\rho$ is given. Then there
  exist $\eps_0>0$, $0<r_0<\rho$ and a constant $C$ and such that all
  surfaces $\Sigma \subset B_r$ as in the statement of
  theorem~\ref{thm:integral-estimate1} with $\eps<\eps_0$ and $r<r_0$ satisfy
  \begin{equation*}
    \|\Acirc\|_{L^2(\Sigma)} + \|\nabla\log H\|_{L^2(\Sigma)} \leq C|\Sigma|,
  \end{equation*}
  and
  \begin{equation*}
    |\lambda + \tfrac{1}{3}\Scal(0)| \leq Cr.
  \end{equation*}
\end{corollary}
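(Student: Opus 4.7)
The plan is that this corollary is essentially a bookkeeping consequence of Theorem~\ref{thm:main-estimate}, once it is fed back into the Sobolev-type inequalities and into the Lagrange-parameter estimate of Lemma~\ref{thm:lambda_estimate}. Nothing genuinely new has to be proved; the hard analytic work already went into the absorption argument for Theorem~\ref{thm:main-estimate}.

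First I would derive the $L^2$-bounds on $\Acirc$ and $\nabla\log H$. Inequality~\eqref{eq:7a} gives
\begin{equation*}
  \int_\Sigma |\Acirc|^2\dmu
  \leq
  C|\Sigma|\int_\Sigma |\nabla A|^2 + H^2|\Acirc|^2\dmu,
\end{equation*}
and using $H^2\leq C|A|^2$ together with Theorem~\ref{thm:main-estimate} the right-hand side is bounded by $C|\Sigma|^2$. Thus $\|\Acirc\|_{L^2(\Sigma)}\leq C|\Sigma|$. The same mechanism, applied to~\eqref{eq:7b},
\begin{equation*}
  \int_\Sigma |\nabla\log H|^2\dmu
  \leq
  C|\Sigma|\int_\Sigma \frac{|\nabla^2 H|^2}{H^2}+|\nabla A|^2\dmu
  \leq
  C|\Sigma|^2,
\end{equation*}
yields $\|\nabla\log H\|_{L^2(\Sigma)}\leq C|\Sigma|$. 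This proves the first inequality of the corollary.

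Next I would plug the bound just obtained into Lemma~\ref{thm:lambda_estimate}. That lemma says
\begin{equation*}
  \bigl|\lambda+\tfrac{1}{3}\Scal(0)\bigr|
  \leq
  C|\Sigma|^{-1}\bigl(\|\Acirc\|_{L^2(\Sigma)}^2+\|\nabla\log H\|_{L^2(\Sigma)}^2\bigr)+Cr,
\end{equation*}
so the right-hand side is dominated by $C|\Sigma|+Cr$. Since $|\Sigma|\leq Cr^2\leq Cr$ for $r<r_0$ (using the area bound in Lemma~\ref{thm:initial-estimate} and shrinking $r_0$ if needed), we conclude $|\lambda+\tfrac{1}{3}\Scal(0)|\leq Cr$, which is the second statement.

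If there is any subtlety at all, it is only in keeping track of the dependencies: we need $\eps_0$ and $r_0$ to be small enough that Theorem~\ref{thm:main-estimate}, Lemma~\ref{thm:lambda_estimate} and the Michael--Simon--Sobolev inequality are all simultaneously available, and we need to use $|\Sigma|\leq Cr^2$ once more to convert the $C|\Sigma|$ term on the right into a $Cr$ term. There is no further absorption or fixed-point argument required here; the corollary is an immediate distillation of the estimates built up in the preceding lemma and theorem.
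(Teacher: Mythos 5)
Your proposal is correct and follows exactly the paper's own route: the first claim is obtained by combining \eqref{eq:7a} and \eqref{eq:7b} with Theorem~\ref{thm:main-estimate} (using $H^2\le 2|A|^2$), and the second by feeding that bound into Lemma~\ref{thm:lambda_estimate} together with $|\Sigma|\le Cr^2$. No differences worth noting.
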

\begin{proof}
  The first claim follows from \eqref{eq:7a}, \eqref{eq:7b} and
  theorem \ref{thm:main-estimate}, whereas the second claim follows
  from the first one in view of lemma~\ref{thm:lambda_estimate}.
\end{proof}
Note that this corollary yields the first claim in
theorem~\ref{thm:intro1}.

\subsection{Estimates in the $L^\infty$-norm}
To proceed further we need an estimate for the size of $H^{-1}$ in the
$L^\infty$-norm. To this end we recall lemma 4.7
from~\cite{Lamm-Metzger-Schulze:2009}.
\begin{lemma}
  \label{thm:sup-norm-from-w2-2}
  Assume that the metric $g=g^E +h$ on $B_\rho$ is given. Then there
  exist $\eps_0>0$, $r_0<\rho$ and a constant $C<\infty$ such that for all surfaces $\Sigma
  \subset B_r$ as in the statement of
  theorem~\ref{thm:integral-estimate1} with $\eps<\eps_0$ and $r<r_0$ and for all smooth
  forms $\phi$ on $\Sigma$ we have
  \begin{equation*}
    \|\phi\|_{L^\infty(\Sigma)}^4
    \leq
    C \|\phi\|^2_{L^2(\Sigma)} \int_\Sigma |\nabla^2\phi|^2 + |H|^4 |\phi|^2 \dmu.
  \end{equation*}
\end{lemma}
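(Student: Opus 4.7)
The plan is to derive this estimate by iterating the Michael-Simon-Sobolev inequality (lemma~\ref{thm:michael-simon-soboloev}), in the spirit of Moser iteration adapted to the surface setting. The claimed bound
\begin{equation*}
  \|\phi\|_{L^\infty}^4 \leq C \|\phi\|_{L^2}^2 \bigl(\|\nabla^2\phi\|_{L^2}^2 + \|H^2\phi\|_{L^2}^2\bigr)
\end{equation*}
is a multiplicative interpolation between $L^2$ and a $W^{2,2}$-type norm, reflecting the Sobolev embedding $W^{2,2}\hookrightarrow L^\infty$ on a two-dimensional surface, with the surface-dependent constants replaced by the uniform Michael-Simon constant from lemma~\ref{thm:michael-simon-soboloev}.

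First I would apply Michael-Simon-Sobolev to $f=|\phi|^q$ for $q\geq 1$, use the Kato-type bound $|\nabla|\phi|^q|\leq q|\phi|^{q-1}|\nabla\phi|$ and Cauchy-Schwarz to relate $\|\phi\|_{L^{2q}}$ to $\|\nabla\phi\|_{L^{2q}}$, $\|H\phi\|_{L^{2q}}$ and lower $L^p$-norms of $\phi$. Iterating with a geometrically increasing sequence of exponents yields a preliminary $L^\infty$ bound of the form
\begin{equation*}
  \|\phi\|_{L^\infty}\leq C\bigl(\|\nabla^2 \phi\|_{L^2}+\|H^2\phi\|_{L^2}\bigr)^{1/2}\|\phi\|_{L^{p_0}}^{1/2}
\end{equation*}
for some $p_0\geq 2$. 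To reduce the exponent on the right from $p_0$ down to $2$ (and so recover the stated form), I would apply Michael-Simon-Sobolev once more in combination with Hölder's inequality, then absorb the resulting $W^{2,2}$-type factor via Young's inequality. The $\|H^2\phi\|_{L^2}$-contribution on the right emerges naturally from the $H|f|$-term in Michael-Simon-Sobolev after it is iterated through the Kato identities.

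The main obstacle is controlling the accumulated constants during the Moser iteration: on a generic surface the scheme fails because the constants depend on $\|H\|_{L^\infty}$, which is not a priori bounded. This is circumvented by invoking the initial smallness of $\|\Acirc\|_{L^2}+\|\nabla\log H\|_{L^2}$ from corollary~\ref{thm:lambda-bounded} together with the integrated $W^{1,2}$-bound for $A$ from theorem~\ref{thm:main-estimate}. These permit absorption of the critical terms that would otherwise spoil the iteration, so that only the stated quantities survive on the right-hand side.
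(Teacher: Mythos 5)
Your overall strategy---iterate the Michael--Simon--Sobolev inequality of lemma~\ref{thm:michael-simon-soboloev} on powers of $|\phi|$ (and of $|\nabla\phi|$) to obtain the multiplicative $L^2$--$W^{2,2}$ interpolation---is exactly what the paper does: it simply cites Lemma~2.8 of Kuwert--Sch\"atzle and observes that the proof carries over verbatim because the only surface-dependent input is the Michael--Simon--Sobolev inequality. So the route is the right one.

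However, your diagnosis of the ``main obstacle'' is a misconception, and the fix you propose would not work if it were really needed. The Kuwert--Sch\"atzle argument never requires a bound on $\|H\|_{L^\infty}$, nor any smallness of $\|\Acirc\|_{L^2}+\|\nabla\log H\|_{L^2}$, nor the $W^{1,2}$-bound on $A$ from theorem~\ref{thm:main-estimate}: the estimate is a \emph{universal} interpolation inequality, valid for any closed surface, with a constant depending only on the Michael--Simon--Sobolev constant (here: on $\rho$ and $h_0$). The point is that in every application of the Sobolev inequality the mean curvature enters only through $\int_\Sigma |H|\,|f|\dmu$ with $f$ a power of $|\phi|$ or $|\nabla\phi|$, so after H\"older the factor $H$ always remains attached to a copy of $\phi$ and accumulates into the $\|H^2\phi\|_{L^2}$-norm on the right-hand side; $H$ is never estimated in $L^\infty$. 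This matters because the a priori estimates of section~\ref{sec:a-priori-estimates} do \emph{not} give an $L^\infty$-bound on $H$ (only on $H^{-1}$, and that is proved later \emph{using} this lemma), so a scheme that genuinely needed $\|H\|_{L^\infty}$ would be stuck. You should also spell out the intermediate step you gloss over: controlling the cross term $\|H\nabla\phi\|_{L^2}$ (which the iteration inevitably produces) by $\|\nabla^2\phi\|_{L^2}$ and $\|H^2\phi\|_{L^2}$ via integration by parts and Young's inequality, so that it does not appear in the final statement.
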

\begin{proof}
  This lemma is a variant of \cite[Lemma
  2.8]{Kuwert-Schatzle:2001}. The proof from there can be carried over
  to our situation, since it mainly relies on the
  Michael-Simon-Sobolev inequality which is also available in this
  situation, cf.\ lemma~\ref{thm:michael-simon-soboloev}.
\end{proof}
\begin{proposition}
  \label{thm:Hinv_bdd}
  Assume that the metric $g=g^E +h$ on $B_\rho$ is given. Then there
  exist $\eps_0>0$, $r_0<\rho$ and a constant $C<\infty$ such that for all surfaces $\Sigma
  \subset B_r$ as in the statement of
  theorem~\ref{thm:integral-estimate1} with $\eps<\eps_0$ and $r<r_0$ we have
  \begin{equation*}
    \|H^{-1}\|_{L^\infty(\Sigma)} \leq C|\Sigma|^{1/2}
  \end{equation*}
\end{proposition}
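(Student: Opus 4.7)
The plan is to apply Lemma \ref{thm:sup-norm-from-w2-2} directly to $\phi = H^{-1}$, which is a smooth (0-form) function on $\Sigma$ since $H>0$ is smooth there. This will yield an inequality that is circular in $\|H^{-1}\|_{L^\infty(\Sigma)}$, and the smallness of $|\Sigma|$ will allow us to absorb.

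First, I would set $\phi = H^{-1}$ in the lemma to obtain
\begin{equation*}
\|H^{-1}\|_{L^\infty(\Sigma)}^4 \leq C\|H^{-1}\|_{L^2(\Sigma)}^2 \int_\Sigma |\nabla^2 H^{-1}|^2 + H^2 \dmu,
\end{equation*}
noting that the weight term $H^4|\phi|^2$ simplifies to $H^2$. The factor $\int_\Sigma H^2\dmu$ is bounded by a constant via lemma~\ref{thm:initial-estimate}, while the trivial estimate $\|H^{-1}\|_{L^2(\Sigma)}^2 \leq |\Sigma|\,\|H^{-1}\|_{L^\infty(\Sigma)}^2$ will be used for the first factor.

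The main work lies in bounding $\int_\Sigma |\nabla^2 H^{-1}|^2\dmu$. Using the chain rule,
\begin{equation*}
|\nabla^2 H^{-1}|^2 \leq C\frac{|\nabla^2 H|^2}{H^4} + C\frac{|\nabla H|^4}{H^6} = C\|H^{-1}\|_{L^\infty(\Sigma)}^2 \left( \frac{|\nabla^2 H|^2}{H^2} + |\nabla \log H|^4 \right) \text{ pointwise after pulling out } \|H^{-1}\|_\infty^2.
\end{equation*}
Here I pull out two powers of $H^{-1}$ bounded by $\|H^{-1}\|_{L^\infty(\Sigma)}$. The integral of $|\nabla^2 H|^2/H^2$ is bounded by $C|\Sigma|$ by theorem~\ref{thm:main-estimate}, and the quartic term $\int_\Sigma |\nabla\log H|^4\dmu$ is controlled via the estimate~\eqref{eq:7d} from the proof of theorem~\ref{thm:integral-estimate1}, combined with corollary~\ref{thm:lambda-bounded} (which gives $\|\nabla\log H\|_{L^2}^2 \leq C|\Sigma|^2$) and theorem~\ref{thm:main-estimate} (which gives $C|\Sigma|$ for the remaining integral). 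The result is $\int_\Sigma|\nabla\log H|^4\dmu \leq C|\Sigma|^3$, so altogether $\int_\Sigma|\nabla^2 H^{-1}|^2\dmu \leq C\|H^{-1}\|_{L^\infty(\Sigma)}^2 |\Sigma|$.

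Substituting back gives
\begin{equation*}
\|H^{-1}\|_{L^\infty(\Sigma)}^4 \leq C|\Sigma|\,\|H^{-1}\|_{L^\infty(\Sigma)}^2 \bigl( \|H^{-1}\|_{L^\infty(\Sigma)}^2 |\Sigma| + 1 \bigr) = C|\Sigma|^2 \|H^{-1}\|_{L^\infty(\Sigma)}^4 + C|\Sigma|\,\|H^{-1}\|_{L^\infty(\Sigma)}^2.
\end{equation*}
Since $|\Sigma|\leq Cr^2$ is small, the coefficient $C|\Sigma|^2$ can be made less than $\tfrac12$ by shrinking $r_0$, so the first term on the right absorbs into the left side. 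Dividing the remaining inequality by $\|H^{-1}\|_{L^\infty(\Sigma)}^2$ (which is finite and positive) yields the claimed bound $\|H^{-1}\|_{L^\infty(\Sigma)} \leq C|\Sigma|^{1/2}$. The subtle point is the circularity introduced by differentiating $H^{-1}$, which forces the bootstrap argument; the estimate succeeds because every obstruction involves positive powers of $|\Sigma|$, which are small.
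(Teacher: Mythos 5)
Your proposal is correct and follows essentially the same route as the paper: apply Lemma~\ref{thm:sup-norm-from-w2-2} to $H^{-1}$, bound $\int_\Sigma |\nabla^2(H^{-1})|^2\dmu$ by $C\|H^{-1}\|_{L^\infty(\Sigma)}^2|\Sigma|$ using Theorem~\ref{thm:main-estimate} together with the $L^4$ bound on $\nabla\log H$ from~\eqref{eq:7d} and Corollary~\ref{thm:lambda-bounded}, and then absorb the $C|\Sigma|^2\|H^{-1}\|_{L^\infty(\Sigma)}^4$ term for $r$ small. The only cosmetic difference is at the end, where the paper applies Young's inequality to $C|\Sigma|\|H^{-1}\|_{L^\infty(\Sigma)}^2$ before absorbing, while you divide through by the (finite, positive) quantity $\|H^{-1}\|_{L^\infty(\Sigma)}^2$; both are valid.
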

\begin{proof}
  % Note that proposition~\ref{thm:H_sup} only yields that
  % $\|H^{-1}\|_{L^\infty} \leq Cr$ so that a seperate estimate is
  % needed.
  %
  The idea is to apply lemma~\ref{thm:sup-norm-from-w2-2} to the
  function $H^{-1}$. We thus estimate
  \begin{equation}
    \label{eq:15}    
    \|H^{-1}\|_{L^2(\Sigma)}^2 \leq |\Sigma| \|H^{-1}\|_{L^\infty(\Sigma)}^2
  \end{equation}
  and calculate
  \begin{equation*}
    \nabla^2 (H^{-1}) = - H^{-2}\nabla^2 H + 2H^{-3}\nabla H \otimes
    \nabla H.
  \end{equation*}
  Thus
  \begin{equation}
    \label{eq:16}
    \begin{split}
      \int_\Sigma |\nabla^2 (H^{-1})|^2 \dmu
      &\leq
      C \int_\Sigma H^{-6} |\nabla H|^4 + H^{-4}|\nabla^2 H|^2 \dmu
      \\
      &\leq
      C \|H^{-1}\|_{L^\infty(\Sigma)}^2 \int_\Sigma \frac{|\nabla^2 H|^2}{H^2} +
      |\nabla \log H|^4 \dmu.
    \end{split}
  \end{equation}
  From \eqref{eq:7d} we get for $\eps$ and $r$ small enough
  \begin{equation*}
    \int_\Sigma |\nabla \log H|^4\dmu
      \leq
      C\left(\int_\Sigma |\nabla\log H|^2\dmu\right)
      \left(\int_\Sigma \frac{|\nabla^2H|^2}{H^2}+|\nabla A|^2\dmu\right)
   \end{equation*}
  By theorem~\ref{thm:main-estimate} and corollary \ref{thm:lambda-bounded} we therefore conclude
  \begin{equation}
\label{eq:16a}
    \int_\Sigma |\nabla\log H|^4 \dmu
    \leq
    C|\Sigma|^3.
  \end{equation}
  Together with equation~\eqref{eq:16} and theorem~\ref{thm:main-estimate} this yields
  \begin{equation}
    \label{eq:14}
    \int_\Sigma |\nabla^2 (H^{-1})|^2 \dmu
    \leq
    C \|H^{-1}\|_{L^\infty(\Sigma)}^2 |\Sigma|.
  \end{equation}
  Plugging estimates~\eqref{eq:14} and~\eqref{eq:15} into the estimate
  from lemma~\ref{thm:sup-norm-from-w2-2}, we find that
  \begin{equation*}
    \begin{split}
      \|H^{-1}\|_{L^\infty(\Sigma)}^4
      &\leq C|\Sigma|\|H^{-1}\|_{L^\infty(\Sigma)}^2 \big( 
      \|H^{-1}\|_{L^\infty(\Sigma)}^2 |\Sigma| + \int_\Sigma H^2 \dmu\big)
      \\
      &\leq
      C|\Sigma|^2 \|H^{-1}\|_{L^\infty(\Sigma)}^4 + C |\Sigma|
      \|H^{-1}\|_{L^\infty(\Sigma)}^2
      \\
      &\leq
      (C|\Sigma|^2+\tfrac12) \|H^{-1}\|_{L^\infty(\Sigma)}^4 + C|\Sigma|^2.
    \end{split}
  \end{equation*}
  If $r<r_0$ and thus $|\Sigma|$ is small, the first term on the right
  can be absorbed, and the claim follows.
\end{proof}
%%% Local Variables: 
%%% mode: latex
%%% TeX-master: "master"
%%% ispell-local-dictionary: "en_US"
%%% End: 

%
\section{Proof of Theorem \ref{thm:intro1}}
\label{sec:gradient}
This section is devoted to the proof of theorem~\ref{thm:intro1}. Note
that the first claim already follows from
corollary~\ref{thm:lambda-bounded}, hence it remains to show the
second claim.

Throughout this section we assume that the surface $\Sigma$ in
question is of Willmore type with multiplier $\lambda$. We assume
further that $H>0$ on $\Sigma$, $\lambda \geq -\eps|\Sigma|^{-1}$ and
$\Sigma\subset B_r(0)$ where $\eps<\eps_0$ and $r<r_0$. Here $\eps_0>0$ and $r_0>0$ are
chosen so that all the estimates from
section~\ref{sec:a-priori-estimates} are applicable.

To get started we recall the splitting~\eqref{eq:17} of the Willmore
functional:
\begin{equation}
  \CW(\Sigma) = 8\pi(1-q(\Sigma)) + \CU(\Sigma) + \CV(\Sigma).
\end{equation}
Since the first term on the right is a topological constant we infer
that the variation of $\CW$, when $\Sigma$ is varied by the normal
vector field $f\nu$ for $f\in C^\infty(\Sigma)$, satisfies
\begin{equation*}
  \delta_f \CW(\Sigma)
  =
  \delta_f \CU(\Sigma) + \delta_f\CV(\Sigma).
\end{equation*}
Equation~\eqref{eq:4} implies that the variation of $\CW$ is given by
\begin{equation*}
  \delta_f\CW(\Sigma) = \lambda \int_\Sigma Hf\dmu,
\end{equation*}
whenever $\Sigma$ is of Willmore type with multiplier $\lambda$.
Thus on such a surface we have
\begin{equation}
  \label{eq:19}
  \lambda \int_\Sigma Hf\dmu
  =
  \delta_f \CU(\Sigma) + \delta_f\CV(\Sigma).
\end{equation}
We shall evaluate these terms when the normal velocity $f$ of the
variation is given by
\begin{equation}
  \label{eq:1}
  f = H^{-1} g(b,\nu),
\end{equation}
where $b\in\IR^3$ is a fixed vector with $|b|=1$. We start with the
left hand side of equation~\eqref{eq:19}.
\subsection{The left hand side of \eqref{eq:19}}
\label{sec:left-hand-side}
We have
\begin{equation}
  \label{eq:21}
  \int_\Sigma H f \dmu = \int_\Sigma g(b,\nu) \dmu.
\end{equation}
To evaluate this expression note that since $\Sigma$ is assumed to be
a topological sphere in $B_\rho$ it must bound a region $\Omega$. We
wish to estimate the volume of $\Omega$. To this end, we approximate
$\Sigma$ by a Euclidean sphere $S = S_{R^E}(a^E)$. With $x$ the
position vector field in $B_\rho$ we define the vector field
\begin{equation}
  \label{eq:37}
  X = x - a^E
\end{equation}
such that 
\begin{equation*}
  \div_{g^E} X = 3,
\end{equation*}
in $\Omega$. On $\Sigma$ we have
\begin{equation*}
  \begin{split}
    |X|
    &=
    |\psi - a^E|
    \leq
    \|\psi-\id_S \|_{L^\infty(S)} + \|\id_S - a^E\|_{L^\infty(S)}
    \\
    &\leq
    C |\Sigma|^{1/2}\big(1 + \|\Acirc^E\|_{L^2(\Sigma,g^E)}\big),
  \end{split}
\end{equation*}
since $\|\id_S - a^E\|_{L^\infty(S)} = R^E$. Here $\id_S:S\to B_\rho$
denotes the standard embedding of $S$ into $B_\rho$.

We integrate the relation~\eqref{eq:37} over $\Omega$ and use partial
integration to conclude that
\begin{equation*}
  3\Vol^E(\Omega) 
  =
  \int_\Sigma g^E(X, \nu^E) \dmu^E .
\end{equation*}
Replacing the integral over $\Sigma$ by an integral over $S$
introduces an error of the form
\begin{equation*}  
  \begin{split}
    &\left|\int_\Sigma  g^E(X,\nu^E) \dmu^E 
      -
      \int_S g^E(X,N) \dmu^E \right|
    \\
    &\quad
    \leq
    C\big(
    |\Sigma| \|\psi-\id\|_{L^\infty(S)}
    + |\Sigma|^{3/2} \|h^2 -1\|_{L^\infty(S)}
    + |\Sigma| \|\nu^E\circ\psi - N\|_{L^2(S)}\big)
    \\
    &\quad \leq
    C|\Sigma|^{3/2}\|\Acirc^E\|_{L^2(\Sigma, g^E)}.
  \end{split}
\end{equation*}
In view of lemma~\ref{thm:euclidean-umbilic} we thus obtain the estimate 
\begin{equation*}
  \left| \Vol^E(\Omega) - \frac{|\Sigma|^{3/2}}{6\pi^{1/2}} \right|
  \leq
  C|\Sigma|^{3/2}\big(\|\Acirc\|_{L^2(\Sigma)}+r^2\big).
\end{equation*}
The assumption~\eqref{eq:3} implies that for the volume elements of $g$
and $g^E$ we have that
\begin{equation*}
\big|\dvol_g-\dvol_{g^E}\big|\le C|x|^2.
\end{equation*}
Combining the last two estimates we get
\begin{equation}
  \label{eq:30a}
  \left| \Vol(\Omega) - \frac{|\Sigma|^{3/2}}{6\pi^{1/2}} \right|
  \leq
  C|\Sigma|^{3/2}\big(\|\Acirc\|_{L^2(\Sigma)}+r^2\big).
\end{equation}
Using corollary~\ref{thm:lambda-bounded} we finally conclude
\begin{equation}
  \label{eq:30}
  \left| \Vol(\Omega) - \frac{|\Sigma|^{3/2}}{6\pi^{1/2}} \right|
  \leq
  Cr^2|\Sigma|^{3/2}.
\end{equation}
The right hand side of~\eqref{eq:21} can be expressed as a
volume integral
\begin{equation*}
  \int_\Sigma g(b,\nu) \dmu
  =
  \int_\Omega \div_M b\, \dvol
\end{equation*}
and since $|\nabla b| \leq Cr$ we estimate
\begin{equation*}
  \left| \int_\Omega \div_M b \dvol \right|
  \leq
  C r \Vol(\Omega)
  \leq
  Cr|\Sigma|^{3/2}
\end{equation*}
Thus, since $\lambda$ is bounded in view of
corollary~\ref{thm:lambda-bounded}, we obtain
\begin{equation}
  \label{eq:20}
  \left|\lambda \int_\Sigma H f \dmu\right|
  \leq
  Cr|\Sigma|^{3/2}.
\end{equation}
\subsection{The variation of $\CU$}
A fairly straight forward calculation shows that the variation of
$\CU$ is given by
\begin{equation}
  \label{eq:18}
  \delta_f \CU(\Sigma)
  =-
  \int_\Sigma 2 \la \Acirc, \nabla^2 f\ra + 2 f \la \Acirc, \Ric^T\ra
  + fH |\Acirc|^2 \dmu,
\end{equation}
where $\Ric^T$ denotes the tangential projection of the Ricci
curvature of $M$ onto $\Sigma$. With $f$ as in equation~\eqref{eq:1},
the second and third term are easily bounded as follows
\begin{equation}
  \label{eq:24}
  \begin{split}
    &\int_\Sigma 2 f \la \Acirc, \Ric^T\ra + fH |\Acirc|^2 \dmu
    \\
    &\quad
    \leq
    C |\Sigma|^{1/2} \sup_\Sigma |f| \left(\int_\Sigma
      |\Acirc|^2\dmu\right)^{1/2}
    + C\int_\Sigma
    |\Acirc|^2\dmu
    \leq
    C|\Sigma|^2
  \end{split}
\end{equation}
where we used the fact that $|g(b,\nu)|\leq C$ together with
corollary~\ref{thm:lambda-bounded} and proposition~\ref{thm:Hinv_bdd}.

To treat the first term in~\eqref{eq:18}, we calculate the first
and second derivatives of $f$. Choosing a local ON-frame
$\{e_1,e_2\}$ on $\Sigma$ we obtain
\begin{equation}
  \label{eq:22}
  \nabla_{e_i} f
  =
  H^{-1} g(\nabla_{e_i} b, \nu) + H^{-1} g(b, e_j) A_i^j - H^{-2}
  \nabla_{e_i} H g(b,\nu)
\end{equation}
and thus in view of the estimates from theorem~\ref{thm:main-estimate}, proposition~\ref{thm:Hinv_bdd}
and the fact that $|\nabla b| \leq Cr$, we find
\begin{equation*}
  \int_\Sigma |\nabla f|^2 \dmu
  \leq
  Cr^2.
\end{equation*}
Differentiating equation~\eqref{eq:22} once more, we obtain
\begin{equation}
  \label{eq:23}
  \begin{split}
    &\nabla_{e_i}\nabla_{e_j} f
    \\
    &=
   - A_i^k A_{jk} f
    +2 H^{-3}\nabla_{e_i} H \nabla_{e_j} H g(b,\nu)
    - H^{-2}\nabla^2_{i,j} H g(b,\nu)
    \\
    &\quad
    + H^{-1}\big(g(\nabla_{e_i}\nabla_{e_j} b , \nu)
    + g(\nabla_{e_i} b, e_k) A_j^k
    + g(\nabla_{e_j} b, e_k) A_i^k
    + \nabla_{e_j} A_i^k g(b, e_k)
    \big)
    \\
    &\quad
    - H^{-2}\big(\nabla_{e_i} H(g(\nabla_{e_j} b,\nu)+g(b,e_k)A^k_j)
    +\nabla_{e_j} H (g(\nabla_{e_i}b,\nu)+g(b,e_k)A^k_i)\big).
  \end{split}
\end{equation}
Our goal is to estimate $\int_\Sigma \la \Acirc,\nabla^2
f\ra\dmu$ so that all we need of $\nabla^2 f$ is its traceless
part. Note that the largest term in expression~\eqref{eq:23} is the
first one on the right hand side. Its contribution consists mainly of
the trace part. When removing the trace, we find that we can estimate
\begin{equation*}
  \begin{split}
    |(\nabla^2 f)^\circ|
    &\leq
    C\big(
    |A||\Acirc||f|
    + H^{-1} |\nabla^2 b|
    + H^{-1} |A| |\nabla b|
    + H^{-1} |\nabla A|
    \\
    &\qquad\ 
    + H^{-2} |\nabla H| |\nabla b|
    + H^{-2} |\nabla H| |A|
    + H^{-2} |\nabla^2 H|
    + H^{-3} |\nabla H|^2\big).
  \end{split}
\end{equation*}
In view of the fact that $|\nabla b| \leq Cr$ and $|\nabla^2 b| \leq
C$, and using the estimates from theorem~\ref{thm:main-estimate}, corollary~\ref{thm:lambda-bounded} and
proposition~\ref{thm:Hinv_bdd}, we infer that
\begin{equation}
  \label{eq:34}
  \int_\Sigma |(\nabla^2 f)^\circ|^2 \dmu
  \leq
  C r^2|\Sigma|,
\end{equation}
Here we also used that by the estimates from section $3$, \eqref{eq:16a} and the fact that $|A|^2=\frac12 H^2+|\Acirc|^2$ we have
\begin{equation*}
  \begin{split}
    \int_\Sigma H^{-4}|\nabla H|^2|A|^2\dmu
    &\le
    C\left(\int_\Sigma |\nabla \log H|^4\dmu\right)^{1/2}
    \!\!\!\left(\int_\Sigma H^{-4}(H^4+|\Acirc|^4)\dmu\right)^{1/2}
    \\
    &\le
    C|\Sigma|^{3/2}\big(|\Sigma|^{1/2}+|\Sigma|^{3/2}\big)
    \le  C |\Sigma|^2.
  \end{split}
\end{equation*}
In view of the Cauchy-Schwarz inequality,
corollary~\ref{thm:lambda-bounded} and estimate~\eqref{eq:34} we infer that
\begin{equation}
  \label{eq:25}
  \int_\Sigma \la \Acirc,\nabla^2 f\ra\dmu
  \leq
  \left(\int_\Sigma |\Acirc|^2 \dmu\right)^{1/2}
  \left(\int_\Sigma |(\nabla^2 f)^\circ|^2 \dmu\right)^{1/2}
  \leq Cr|\Sigma|^{3/2}.
\end{equation}
Collecting the estimates~\eqref{eq:24} and~\eqref{eq:25} implies the
desired bound on~\eqref{eq:18}, namely
\begin{equation}
  \label{eq:26}
  |\delta_f\CU(\Sigma)|
  \leq
  Cr|\Sigma|^{3/2}.
\end{equation}
\subsection{The variation of $\CV(\Sigma)$}
In section 5.5 of \cite{Lamm-Metzger-Schulze:2009} the
following expression for $\delta_f\CV(\Sigma)$ was derived:
\begin{equation}
  \label{eq:27}
  \delta_f\CV(\Sigma)
  =
  \int_\Sigma - f H G(\nu,\nu) - \tfrac{1}{2} f H \Scal +
  2 f \la\Acirc,G^T\ra - 2\omega(\nabla f) \dmu
\end{equation}
where as before $G = \Ric - \half g$ denotes the Einstein tensor of
$M$ and $\omega = \Ric(\nu,\cdot)^T$. Recall that we chose $f = H^{-1}
g(b,\nu)$ above. In the expression~\eqref{eq:22} we split $A= \Acirc +
\half H\gamma$ and obtain
\begin{equation*}
  \nabla_{e_i} f
  =
  \half g(b,e_i)
  + H^{-1}g(\nabla_{e_i} b,\nu)
  + H^{-1}\Acirc_i^j g(b,e_j)
  - H^{-2}\nabla H g(b,\nu).
\end{equation*}
Plugging this expression into equation~\eqref{eq:27} yields 
\begin{equation}
  \label{eq:28}
  \begin{split}
    \delta_f\CV(\Sigma)
    &=
    \int_\Sigma
    - G(b,\nu)
    - \tfrac{1}{2} g(b,\nu) \Scal
    +  2 f \la\Acirc,G^T\ra
    \\
    &\qquad
    - 2 w(e_i)\big(H^{-1}g(\nabla_{e_i} b,\nu)
    + H^{-1}\Acirc_i^j g(b,e_j)
    - H^{-2}\nabla H g(b,\nu)\big) \dmu.  
  \end{split}
\end{equation}
Using theorem~\ref{thm:main-estimate}, corollary~\ref{thm:lambda-bounded} and
proposition~\ref{thm:Hinv_bdd} we estimate
\begin{equation}
  \label{eq:29}
  \begin{split}
    &\Bigg|\int_\Sigma
      2 f \la\Acirc,G^T\ra
      - 2 w(e_i)\big(H^{-1}g(\nabla_{e_i} b,\nu)
      \\
      &\qquad
      + H^{-1}\Acirc_i^j g(b,e_j)
      - H^{-2}\nabla H g(b,\nu)\big) \dmu
    \Bigg|
    \leq
    Cr|\Sigma|^{3/2}.
  \end{split}
\end{equation}
In \cite{Lamm-Metzger-Schulze:2009} the Pohozaev-Identity was used to
estimate the term $ \int_\Sigma G(b,\nu)\dmu$. This is not really
necessary here, the following simpler approach is sufficient. Recall
that the divergence of $G$ with respect to the $g$-metric is
zero due to the Bianchi-identity. Define the vector field $X$ by the requirement that $g(X,Y)=
G(b,Y)$ for all vector fields $Y$ on $B_\rho$. Then the fact that $G$
is divergence free implies that
\begin{equation}
  \label{eq:31}
  \div_M X = \la G, \nabla b\ra.
\end{equation}
In section~\ref{sec:left-hand-side} we used that $\Sigma$ bounds a
region $\Omega$ with $\Vol(\Omega) \leq C|\Sigma|^{3/2}$. To proceed,
we integrate the relation~\eqref{eq:31} over $\Omega$ and after integration by
parts we get (recall that $|\nabla b|\le Cr$)
\begin{equation*}
  \left|\int_\Sigma G(b,\nu)\dmu\right|
  =
  \left|\int_\Omega \div_M X \dvol\right|
  \leq
  Cr \Vol(\Omega)
  \leq
  Cr|\Sigma|^{3/2}.
\end{equation*}
In combination with equation~\eqref{eq:28} and estimate~\eqref{eq:29}
we infer that
\begin{equation*}
  \left| \delta_f\CV(\Sigma) + \half \int_\Sigma g(b,\nu) \Scal
    \dmu\right|
  \leq
  Cr|\Sigma|^{3/2}.
\end{equation*}
The final task is to estimate $\int_\Sigma g(b,\nu) \Scal \dmu$. As
before, we express this surface integral as a volume integral. To this
end, we consider the vector field $X = \Scal\, b$ and calculate
\begin{equation*}
  \div_M X = g(b,\nabla\Scal) + \Scal \div_M b.
\end{equation*}
Since $\nabla\Scal = \nabla\Scal(0) + O(r)$, $\nabla b = O(r)$ and
$g= g^E + O(r^2)$ we infer
\begin{equation*}
  \div_M X = g^E(b,\nabla \Scal(0)) + O(r).
\end{equation*}
Thus we can calculate
\begin{equation*}
  \int_\Sigma g(b,\nu)\Scal \dmu
  =
  \int_\Omega \div_M X \dvol
  =
  \Vol(\Omega) g^E(b,\nabla \Scal(0)) + O(r\Vol(\Omega)).
\end{equation*}
This finally leaves us with the estimate
\begin{equation}
  \label{eq:32}
  \left| \delta_f\CV(\Sigma)
    + \half \Vol(\Omega) g^E(b,\nabla \Scal(0))
  \right|
  \leq
  Cr|\Sigma|^{3/2}.
\end{equation}
\subsection{The conclusion}
To prove theorem~\ref{thm:intro1}, we combine the results from the
previous sections. Combining equation~\eqref{eq:19} with the
estimates~\eqref{eq:20} and~\eqref{eq:26} yields that for $f$ as
in~\eqref{eq:1} we have
\begin{equation*}
  \left|\delta_f \CV(\Sigma)\right|
  \leq
  Cr|\Sigma|^{3/2}.
\end{equation*}
In combination with~\eqref{eq:32} this gives
\begin{equation*}
  \left| \Vol(\Omega) g^E(b,\nabla \Scal(0)) \right|
  \leq
  Cr|\Sigma|^{3/2},
\end{equation*}
and since $\Vol(\Omega) \geq C^{-1}|\Sigma|^{3/2}$ we infer
\begin{equation*}
  \left| g^E(b,\nabla \Scal(0)) \right|
  \leq
  Cr.
\end{equation*}
Setting $b = \nabla \Scal(0) / |\nabla \Scal(0)|^E$ finally shows
that
\begin{equation*}
  |\nabla \Scal(0)|^E
  \leq
  Cr.
\end{equation*}
Since we can let $r\to 0$ by the assumptions of
theorem~\ref{thm:intro1}, we infer the claim, namely that $\nabla
\Scal(0)=0$.

%%% Local Variables: 
%%% mode: latex
%%% TeX-master: "master"
%%% ispell-local-dictionary: "en_US"
%%% End: 

%
\section{Expansion of the Willmore functional}
\label{sec:expans-willm-funct}
In this section we calculate the expansion of the Willmore functional
on small surfaces using the estimates from
section~\ref{sec:a-priori-estimates}.  We wish to emphasize here that
similar expansions for the Willmore functional have been computed
previously for geodesic spheres in \cite[Section 3]{fan-shi-tam:2009},
where also the subsequent term in the expansion is calculated, and for
perturbations of geodesic spheres in \cite{Mondino:2008}.

The calculation here has the advantage that it works under much more
general conditions. Namely we have the following theorem which holds
in particular for surfaces as in theorem~\ref{thm:intro1} due to the
estimates of lemma~\ref{thm:initial-estimate} and
corollary~\ref{thm:lambda-bounded}.
\begin{theorem}
  \label{thm:expansion}
  Let $g=g^E+h$ on $B_\rho$ be given and let $c<\infty$ be a
  constant. Then there exists a constant $C$ depending only on $c$,
  $\rho$ and $h_0$ as in equation~\eqref{eq:3}, such that the
  following holds.

  Let $\Sigma\subset B_r$ be a spherical surface with $r<\rho$ such
  that
  \begin{equation*}
    \CU(\Sigma) \leq cr|\Sigma|
    \qquad\text{and}\qquad
    |\Sigma|\le cr.
  \end{equation*}
  Then the following estimate holds:
  \begin{equation*}
    \left| \CW(\Sigma) - 8\pi + \frac{|\Sigma|}{3}\Scal(0) \right|
    \leq Cr|\Sigma|.
  \end{equation*}  
\end{theorem}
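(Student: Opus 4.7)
The plan is to start from the splitting identity~\eqref{eq:17}. Since $\Sigma$ is a topological sphere, $q(\Sigma)=0$ and therefore
\[
  \CW(\Sigma) = 8\pi + \CU(\Sigma) + \CV(\Sigma),
\]
so the task reduces to showing $\CU(\Sigma) + \CV(\Sigma) = -\tfrac{|\Sigma|}{3}\Scal(0) + O(r|\Sigma|)$. The first summand is handled immediately by the hypothesis $\CU(\Sigma)\le cr|\Sigma|$, which already has the right scaling. So everything hinges on estimating $\CV(\Sigma)=2\int_\Sigma G(\nu,\nu)\dmu$.

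Since $g(\nu,\nu)=1$ and $G=\Ric-\tfrac12\Scal\,g$, I would rewrite
\[
  \CV(\Sigma) = 2\int_\Sigma \Ric(\nu,\nu)\dmu - \int_\Sigma \Scal\dmu.
\]
For the first term I apply lemma~\ref{thm:calc-ric} directly. Observe that the hypothesis $\CU(\Sigma)\le cr|\Sigma|$ combined with $|\Sigma|\le cr$ yields
\[
  \|\Acirc\|_{L^2(\Sigma)} \le \sqrt{cr|\Sigma|} \le cr,
\]
so lemma~\ref{thm:calc-ric} gives
\[
  \Bigl|\int_\Sigma \Ric(\nu,\nu)\dmu - \tfrac{|\Sigma|}{3}\Scal(0)\Bigr|
  \le C|\Sigma|\bigl(\|\Acirc\|_{L^2(\Sigma)}+r\bigr)\le Cr|\Sigma|.
\]
For the second term I use the pointwise bound $|\Scal(x)-\Scal(0)|\le Cr$ on $B_r$, coming from the curvature-derivative bounds built into $h_0$, to conclude
\[
  \Bigl|\int_\Sigma \Scal\dmu - |\Sigma|\Scal(0)\Bigr|\le Cr|\Sigma|.
\]
Adding these two estimates with the correct coefficients yields
\[
  \CV(\Sigma) = \tfrac{2}{3}|\Sigma|\Scal(0) - |\Sigma|\Scal(0) + O(r|\Sigma|) = -\tfrac{|\Sigma|}{3}\Scal(0) + O(r|\Sigma|),
\]
and plugging this together with $\CU(\Sigma)=O(r|\Sigma|)$ into the splitting identity gives the desired expansion.

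The only nontrivial input is lemma~\ref{thm:calc-ric}, but it is already established in the paper, so the proof is essentially a short bookkeeping argument. The mildly delicate point is checking that the assumptions $\CU(\Sigma)\le cr|\Sigma|$ and $|\Sigma|\le cr$ are exactly strong enough to convert the mixed bound $C|\Sigma|(\|\Acirc\|_{L^2}+r)$ coming from lemma~\ref{thm:calc-ric} into the clean $O(r|\Sigma|)$ bound claimed; this is what forces the joint smallness hypothesis in the statement of the theorem rather than a single bound on $\CU(\Sigma)$ alone.
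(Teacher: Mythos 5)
Your proposal is correct and follows essentially the same route as the paper: split $\CW$ via the Gauss--Bonnet identity~\eqref{eq:17}, absorb $\CU(\Sigma)$ using the hypothesis, and estimate $\CV(\Sigma)$ by combining lemma~\ref{thm:calc-ric} with the pointwise bound $\Scal = \Scal(0) + O(r)$. Your explicit check that the hypotheses give $\|\Acirc\|_{L^2(\Sigma)} \le Cr$, so that the mixed bound from lemma~\ref{thm:calc-ric} becomes $O(r|\Sigma|)$, is exactly the step the paper leaves implicit.
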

\begin{proof}
  We use the Gauss equation to express the Willmore functional as in
  equation~\eqref{eq:17}
  \begin{equation*}
    \CW(\Sigma) = 8\pi + \CU(\Sigma) + \CV(\Sigma).
  \end{equation*}
  By the first assumption the term $\CU(\Sigma)$ is a lower order
  term and can be neglected. Furthermore
  \begin{equation*}
    \CV(\Sigma)
    =
    2\int_\Sigma G(\nu,\nu) \dmu
    =
    \int_\Sigma 2\Ric(\nu,\nu) - \Scal \dmu.
  \end{equation*}
  In view of the assumptions of the theorem, lemma~\ref{thm:calc-ric}
  implies that
  \begin{equation*}
    \left|\int_\Sigma \Ric(\nu,\nu)\dmu
    -
    \frac{|\Sigma|}{3} \Scal(0)
    \right|
    \leq
    Cr|\Sigma| .
  \end{equation*}
  Since $\Scal = \Scal(0) + O(r)$ we furthermore have 
  \begin{equation*}
    \int_\Sigma \Scal \dmu
    =
    |\Sigma| \Scal(0) + O(r|\Sigma|)
  \end{equation*}
  so that in combination
  \begin{equation*}
    \CV(\Sigma)
    =
    -\frac{|\Sigma|}{3}\Scal(0) + O(r|\Sigma|).
  \end{equation*}
  Altogether this yields 
  \begin{equation*}
    \CW(\Sigma) = 8\pi - \frac{|\Sigma|}{3}\Scal(0) + O(r|\Sigma|),
  \end{equation*}
  which is the desired expansion.
\end{proof}
\begin{corollary}
  Let $g$ be as in theorem~\ref{thm:expansion} and assume that
  $\Sigma$ is a spherical surface in $B_r$ satisfying
  \begin{equation*}
    \CU(\Sigma) \leq c|\Sigma|^2
    \qquad\text{and}\qquad
    |\Sigma|\le cr.
  \end{equation*}
  If $\Omega$ denotes the region bounded by $\Sigma$, then the Hawking
  mass of $\Sigma$,
  \begin{equation*}
    m_H(\Sigma)
    =
    \frac{|\Sigma|^{1/2}}{(16\pi)^{3/2}} \big(16\pi -  2\CW(\Sigma)\big)
  \end{equation*}
  satisfies
  \begin{equation*}
    \frac{m_H(\Sigma)}{\Vol(\Omega)}
    =
    \frac{\Scal(0)}{16\pi} + O(r).
  \end{equation*}
\end{corollary}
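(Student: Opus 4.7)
The plan is to combine the expansion of $\CW(\Sigma)$ from theorem~\ref{thm:expansion} with the volume estimate~\eqref{eq:30a} and then form the quotient. First I would verify the hypotheses of theorem~\ref{thm:expansion}: since $|\Sigma|\le cr$, the bound $\CU(\Sigma)\le c|\Sigma|^2$ implies $\CU(\Sigma)\le c^2 r|\Sigma|$, so theorem~\ref{thm:expansion} applies and gives
\begin{equation*}
  \CW(\Sigma) = 8\pi - \tfrac{|\Sigma|}{3}\Scal(0) + O(r|\Sigma|).
\end{equation*}
Hence $16\pi - 2\CW(\Sigma) = \tfrac{2|\Sigma|}{3}\Scal(0) + O(r|\Sigma|)$, and multiplying by $|\Sigma|^{1/2}/(16\pi)^{3/2} = |\Sigma|^{1/2}/(64\pi^{3/2})$ yields
\begin{equation*}
  m_H(\Sigma) = \frac{|\Sigma|^{3/2}}{96\pi^{3/2}}\Scal(0) + O(r|\Sigma|^{3/2}).
\end{equation*}

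Next I would use~\eqref{eq:30a}, which gives $\bigl|\Vol(\Omega)-|\Sigma|^{3/2}/(6\pi^{1/2})\bigr|\le C|\Sigma|^{3/2}(\|\Acirc\|_{L^2(\Sigma)}+r^2)$. The assumption $\CU(\Sigma)\le c|\Sigma|^2$ yields $\|\Acirc\|_{L^2(\Sigma)}\le c^{1/2}|\Sigma|\le c^{3/2}r$, so
\begin{equation*}
  \Vol(\Omega) = \frac{|\Sigma|^{3/2}}{6\pi^{1/2}}\bigl(1 + O(r)\bigr),
\end{equation*}
and in particular $\Vol(\Omega)\ge C^{-1}|\Sigma|^{3/2}$ for $r$ small. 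Dividing the two expansions and using $\tfrac{6\pi^{1/2}}{96\pi^{3/2}} = \tfrac{1}{16\pi}$ gives
\begin{equation*}
  \frac{m_H(\Sigma)}{\Vol(\Omega)} = \frac{\Scal(0)}{16\pi} + O(r),
\end{equation*}
as claimed.

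The only point requiring care is that~\eqref{eq:30a} is derived in section~\ref{sec:left-hand-side} in the course of analyzing surfaces of Willmore type. Its proof, however, uses only the De~Lellis--M\"uller approximation of $\Sigma$ by a Euclidean sphere (theorem~\ref{thm:delellis-muller}) together with lemma~\ref{thm:euclidean-umbilic}. The smallness hypothesis $\|\Acirc^E\|_{L^2(\Sigma,\gamma^E)}^2<8\pi$ required by theorem~\ref{thm:delellis-muller} reduces via lemma~\ref{thm:euclidean-umbilic} to controlling $\CU(\Sigma)$ and $r^4\|H\|_{L^2}^2$; the former is small by hypothesis, while $\|H\|_{L^2}^2=2\CW(\Sigma)$ is bounded through the Gauss--Bonnet identity~\eqref{eq:17} combined with the smallness of $\CU(\Sigma)$ and $\CV(\Sigma)$ (for $r$ small). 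With that observation in place, the remainder of the argument is pure bookkeeping.
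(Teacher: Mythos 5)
Your proposal is correct and follows the same route as the paper: verify that the hypothesis $\CU(\Sigma)\le c|\Sigma|^2\le c^2r|\Sigma|$ puts you in the scope of theorem~\ref{thm:expansion}, then divide the resulting expansion of $m_H(\Sigma)$ by the volume estimate~\eqref{eq:30a}. Your closing remark on why~\eqref{eq:30a} remains valid outside the Willmore-type setting of section~\ref{sec:gradient} is a point the paper leaves implicit, and your constant bookkeeping ($6\pi^{1/2}/(96\pi^{3/2})=1/(16\pi)$) checks out.
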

\begin{proof}
  This is a simple consequence of the expansion in
  theorem~\ref{thm:expansion} which holds also under the stronger
  assumption of the corollary. In addition we use the fact that
  the volume of $\Omega$ satisfies
  \begin{equation*}
    \left|
      \Vol(\Omega) - \frac{|\Sigma|^{3/2}}{6\pi^{1/2}}
    \right|
    \leq
    Cr|\Sigma|^{3/2}
  \end{equation*}
  which follows from equation~\eqref{eq:30a}.
\end{proof}

%%% Local Variables: 
%%% mode: latex
%%% TeX-master: "master"
%%% End: 

\bibliographystyle{abbrv}
\bibliography{references}
\end{document}